\documentclass[11pt]{article}

\usepackage{graphics,enumitem,epsfig,textcomp}
\usepackage{amsfonts,amsmath,amssymb,amsthm}
\usepackage{euscript,color,mathrsfs}

\usepackage{booktabs}

\usepackage{url}
\usepackage{hyperref}
\hypersetup{
    colorlinks=true,
    linkcolor=blue,
    filecolor=blue,      
    urlcolor=blue,
    citecolor=blue,
    }

\usepackage[mathlines]{lineno}
\usepackage[margin=1in]{geometry}

\numberwithin{equation}{section}
\newtheorem{lemma}{Lemma}%[section]
\newtheorem{corollary}{Corollary}%[section]
\newtheorem{remark}{Remark}%[section]

\def\d{\,\mathrm{d}}
\def\N{\mathbb{N}}
\def\Z{\mathbb{Z}}
\def\R{\mathbb{R}}
\def\C{\mathbb{C}}

\def\({\begin{eqnarray}}
\def\){\end{eqnarray}}
\def\[{\begin{eqnarray*}}
\def\]{\end{eqnarray*}}
\def\part#1#2{\frac{\partial #1}{\partial #2}}

\def\grad{\nabla}
\def\Norm#1{\left\| #1 \right\|}
\def\bar{\overline}

\def\tot#1#2{\frac{\d #1}{\d #2}} 

\def\laplace{\Delta}
\def\d{\,\mathrm{d}}
\def\N{\mathbb{N}}
\def\R{\mathbb{R}}

\def\epsilon{\varepsilon}

\def\d{\mathrm{d}}

\def\AK{A}  %A\rule{0pt}{9pt}^{(K)}}

\def\conv{\!\ast\!}

\def\x{x}
\def\y{y}
\def\z{z}

\def\yy{\mathbf{y}}

\def\ee{\mathbf{e}}

\def\Re{\operatorname{Re}}

\usepackage{authblk}

\title{Memory Stabilizes Spontaneous Particle Aggregation:\\ A Linearized Vlasov--Fokker--Planck Analysis}
\date{}

\author{
     Jan Haskovec\\ Mathematical and Computer Sciences
            and Engineering Division,\\
         King Abdullah University of Science and Technology,\\
         Thuwal 23955-6900, Kingdom of Saudi Arabia\\
         {\it jan.haskovec@kaust.edu.sa}
     }

\begin{document}

\maketitle

\begin{abstract} \noindent
We perform a linearized stability analysis of the Vlasov--Fokker--Planck equation obtained as the mean-field description of a stochastic spontaneous aggregation model with memory. Memory is represented by a chain of $K$ internal variables.
% through which individuals retain information about previously encountered population densities.
We characterize the spatially homogeneous equilibria and derive a scalar dispersion relation for spatially inhomogeneous perturbations. When all relaxation rates are equal, we show that every Fourier mode that is unstable in the memoryless model possesses a unique critical relaxation parameter: sufficiently long memory stabilizes the mode, whereas it remains unstable for short memory. Although memory may destabilize individual modes associated with negative Fourier coefficients of the sensing kernel, we prove that, for radially symmetric distance-decreasing kernels, it cannot destabilize a homogeneous equilibrium that is stable in the memoryless model. Thus, memory cannot create an overall instability of an otherwise stable homogeneous state, although it may change the set of the unstable modes.
We present a numerical example for the normalized top-hat kernel, demonstrating that increasing the effective memory length
successively stabilizes the Fourier modes, with higher frequencies being stabilized before lower ones.
This is consistent with the coarsening effect observed in recent particle simulations.
Finally, under suitable nondegeneracy and regularity assumptions, we use the Crandall--Rabinowitz theorem to obtain branches of spatially inhomogeneous stationary solutions bifurcating from the homogeneous state.
\end{abstract}
\vskip 5mm

\noindent
\textbf{Keywords.} Spontaneous aggregation; Memory effects; Stochastic particle systems; Vlasov--Fokker--Planck equation; Linearized stability; Dispersion relation.
\vskip 5mm

\noindent
\textbf{2020 Mathematics Subject Classification.} 35Q84, 35B35 (primary); 35H10, 35B36 (secondary).
%35Q84: Fokker-Planck equations
%35B35: Stability in context of PDEs
%35H10: Hypoelliptic equations
%35B36: Pattern formations in context of PDEs

%%%%%%%%%%%%%%%%%%%%%%%%%%%%%%%%
\section{Introduction}\label{sec:Intro}
Collective spatial organization is a fundamental phenomenon in biological systems, observed across a wide range of scales, from bacterial and amoeboid motion to the behaviour of social insects, fish schools and bird flocks; see, e.g., \cite{BergBrown:1972, Erban:2004, Sumpter:2006, Sumpter:2010, Vicsek:2012}. A particularly simple mechanism for the emergence of aggregation is density-dependent random motility: individuals modulate the amplitude of their random motion according to the locally perceived population density via a nonlinear response function $G>0$.
The population density in each agent's vicinity is sensed through a radially symmetric distance-decreasing sensing kernel $W$.
If the motility decreases in regions of high density, then the resulting continuum model contains an effective aggregating drift, competing with diffusion. This mechanism is captured by the spontaneous aggregation model introduced in \cite{BHW:2012}. In many biological situations, however, individuals do not respond only to the instantaneous state of their environment, but also to previously encountered stimuli. It is therefore natural to incorporate memory, or delayed response, into models of biological collective phenomena, see, e.g., \cite{Boschi:2021, Couzin:2002, Erban:2005, Fagan:2023, Falcon:2023, Kim:2024, Sun:2014}.

The stochastic particle model with memory introduced in \cite{EH:2026} extends the direct aggregation model of \cite{BHW:2012} by equipping each agent with a chain of $K$ internal variables. These variables store information about previously perceived densities and determine the amplitude of the stochastic motion. In the case when all internal variables have the same relaxation coefficient $\alpha>0$, the corresponding memory kernel can be evaluated explicitly and has effective memory length $K/\alpha$. Systematic numerical simulations carried out in \cite{EH:2026} indicate that memory has a significant influence on pattern formation: moderate-term memory promotes coarsening, producing fewer but larger clusters, whereas long-term memory inhibits aggregation. The goal of the present paper is to give an analytical explanation of this behaviour by studying the linearized stability of spatially homogeneous steady states of the corresponding Vlasov--Fokker--Planck equation.

We first derive the general spatially homogeneous equilibria of the Vlasov--Fokker--Planck equation. In contrast to the singular relaxation limit considered in \cite{EH:2026}, where the invariant measure is concentrated on a lower-dimensional manifold, the finite relaxation model admits a full Gaussian invariant measure in the internal variables. We then linearize the equation around a spatially homogeneous equilibrium and decompose the perturbation into spatial Fourier modes. For each nonzero mode we derive a scalar dispersion relation. In the case of equal relaxation rates, 
the dispersion relation yields an explicit instability criterion for every mode with a positive Fourier coefficient of the sensing kernel.

Our analysis shows that memory may only suppress the aggregation instabilities inherited from the memoryless model and cannot destabilize an otherwise stable homogeneous equilibrium. More precisely, for each Fourier mode that is unstable in the memoryless model, there exists a unique critical value of the relaxation parameter $\alpha>0$ such that the mode is unstable for sufficiently large $\alpha$, corresponding to short memory, and stable for sufficiently small $\alpha$, corresponding to long memory. 
However, as we shall demonstrate with an example, memory can destabilize a particular Fourier mode that was stable without memory.
This may happen for modes associated with negative Fourier coefficients of the interaction kernel.
Nevertheless, our analysis shows that this can occur only if the homogeneous state is already unstable in the memoryless model.
The main conclusion of our analysis can therefore be formulated as follows:
\begin{quote}
\bfseries \itshape
For radially symmetric distance-decreasing sensing kernels $W$,
memory cannot create an instability of a stable homogeneous state of the memoryless model.
However, it may change the set and the nature of the unstable modes, thus modifying the aggregation patterns
produced by the model.
\end{quote}
We also show that for the normalized top-hat kernel considered in the numerical examples,
the modification of the aggregation patterns is such that higher Fourier modes are stabilized before lower ones
as the effective memory length increases. This is consistent with the coarsening effect observed numerically in \cite{EH:2026}.

We also discuss the existence of spatially inhomogeneous stationary solutions in the spatially one-dimensional setting.
Under suitable nondegeneracy and smoothness assumptions, the Crandall--Rabinowitz bifurcation theorem yields a branch of inhomogeneous steady states that bifurcates from the homogeneous branch whenever the linearized stationary operator has a simple zero eigenvalue.
In particular, for the normalized top-hat interaction kernel, we show that if the first Fourier mode is critical, then the simplicity condition is automatically satisfied.

Finally, we present a numerical evaluation of the roots of the dispersion relation for the response function $G(s)=e^{-s}$ and the normalized top-hat sensing kernel, which is the setting used both in \cite{BHW:2012} and \cite{EH:2026}.
For the first few Fourier modes and for $K=1,2,3$, we compute the critical values of $\alpha$ and plot the corresponding real roots of the dispersion relation. These computations illustrate the analytical conclusions: for fixed $K$, increasing the effective memory length by decreasing $\alpha$ reduces the number of unstable Fourier modes and therefore leads to less oscillatory aggregation patterns, or inhibits the aggregation completely once $\alpha$ becomes small enough.

The paper is organized as follows. In Section~\ref{sec:TheModel} we recall the stochastic spontaneous aggregation model with memory and its Vlasov--Fokker--Planck mean-field description. Section~\ref{sec:stability} is devoted to the derivation and analysis of the dispersion relation for the linearized equation. In Section~\ref{sec:Rabinowitz} we prove, employing the Crandall--Rabinowitz theorem, the bifurcation of spatially inhomogeneous stationary solutions from the homogeneous branch. Section~\ref{sec:visual} contains the numerical evaluation of the dispersion relation for a generic one-dimensional setting. In Section \ref{sec:destab} we give an example of destabilization of a particular Fourier mode through the introduction of memory. Finally, the Appendix, Section \ref{sec:App}, establishes useful facts about the spectrum of the free kinetic operator, and, moreover, provides explicit dispersion relations for $K=1,2,3$.

%%%%%%%%%%%%%%%%%%%%%%%%%%%%%%%%%%
\section{The model}\label{sec:TheModel}
The spontaneous particle aggregation model with memory introduced in~\cite{EH:2026} consists of a group of $N \ge 2$ biological agents,
characterized by their positions ${\x}_i(t)\in\Omega$, $i\in [N]$, where $\Omega:=(0,1)^d$, $d \geq 1$, is the $d$-dimensional torus.
We use the notation $[N] := \{1,2,\ldots,N \}$ throughout the paper.
Every individual senses the average density of its close neighbors, given by
\begin{equation}   
\label{rho_i}
\vartheta_i(t) = \frac{1}{N-1} \sum_{\substack{j\in [N] \\ j \neq i}} W({\x}_i(t)-{\x}_j(t)) \qquad \mbox{for} \quad i \in [N].
\end{equation}
The sensing kernel $W=W({\x})$ is assumed to be nonnegative, bounded, radially symmetric and nonincreasing with respect to $|\x|$.
We regard $W$ as a compactly supported function on $\mathbb R^d$, periodically extended to the torus $\Omega$, and normalized by
\begin{equation}  
\label{eq:Wnorm}
\int_{\Omega} W({\x}) \, \d {\x} = 1 \,. 
\end{equation}
A prototypical example is the normalized characteristic function of the ball $B_R(0)$,
corresponding to the sampling radius $0 < R < 1/2$.
The average density $\vartheta_i$ is then simply the fraction
of individuals located within the distance $R$ from the $i$-th individual.

The agents are equipped with memory, realized as the chain of internal variables
${\y}_{i}^{k}={\y}_{i}^{k}(t)\in\R^d$ for $i \in [N]$ and $k\in [K]$,
where $K\geq 1$ is the number of memory `layers'.
The dynamics of the group is then governed by the following system of SDEs,
\begin{equation}   
\label{eq:SDE0}
\begin{aligned}
\tot{{\x}_i(t)}{t} &\,=\, {\y}_{i}^1(t)\,, \\
\varepsilon_k \tot{{\y}_{i}^k(t)}{t} &\,=\, - \alpha_k \,{\y}_{i}^k(t) \,+\, {\y}_{i}^{k+1}(t)\,, \qquad\quad \mbox{for} \;\; k \in [K-1]\,, \\
\varepsilon_K \,\d {\y}_{i}^K(t) &\,=\,  - \alpha_K \,{\y}_{i}^K(t) \, \d t \,+\, G(\vartheta_i(t)) \,\d{B}_i^t\,,
\end{aligned}
\end{equation}
where ${B}_i^t$ are independent $d$-dimensional Brownian motions.
The positive constants $\alpha_k > 0$ are the relaxation coefficients,
and the parameters $\varepsilon_k>0$ define the relaxation time scales.
The {response function} $G:\R^+ \to\R^+$ is assumed to be bounded, positive and decreasing.
The monotonicity of $G$ is implied by the modeling assumption that the individuals respond to 
higher perceived population densities $\vartheta_i = \vartheta_i(t)$ in their vicinity by reducing the amplitude of 
their stochastic movement.

For a slight simplification of the analysis and notation, we shall fix unit relaxation time scales in the sequel,
\begin{equation} \label{epsare1}
   \varepsilon_k:=1 \qquad\quad \mbox{for all} \quad k\in[K].
\end{equation}
%Moreover, we restrict ourselves to the spatially one-dimensional setting, $d=1$.
%This restriction, in fact, is merely to ease the notation, and poses no loss of generality.
%The multidimensional case is analysed, in essence, simply by treating \eqref{SDE1}
%for each spatial component, coupled only through the terms $\vartheta_i = \vartheta_i(t)$, $i\in[K]$.
For each $i\in[N]$ we denote $\mathsf{y}_i := \left( y^1_i,\dots,y^K_i \right)^T \in\mathbb R^{dK}$,
and $\mathsf{y} := \left( \mathsf{y}_1,\dots, \mathsf{y}_N \right) \in\mathbb R^{dKN}$.
The SDE system \eqref{eq:SDE0} with \eqref{epsare1} can then be written, for each
spatial component $\mathsf{y}_i^\ell = \mathsf{y}_i^\ell(t)\in \R^{K}$, $\ell\in [d]$,
in matrix form as
\begin{equation}
\label{eq:SDE1}
   \d{\mathsf{y}_i^\ell(t)} \,=\, A \mathsf{y}_i^\ell(t) \, \d t \,+  \, G(\vartheta_i(t)) \, \ee_K \, \d B_{i,\ell}^t,
\end{equation}
%where $B_i^t$, $i\in[N]$, are independent $d$-dimensional Brownian motions, and
where $\ee_K:=(0,\dots,0,1)^T\in\mathbb R^K$ denotes the $K$-th standard basis vector and
the constant matrix $A\in \R^{K\times K}$ %and the vectors ${\boldsymbol{\beta}}_i = {\boldsymbol{\beta}}^{(K)}_i(t) \in \R^{K}$
is given by
\begin{equation}
\label{def:A}
  A := \begin{pmatrix}
      - \alpha_1 & 1 & 0 & 0 & \ldots & 0 \\
      0 & -\alpha_2 & 1 & 0 & \ldots & 0 \\
      \vdots & & \ddots & & 1 & 0 \\
      0 & &  & & -\alpha_{K-1}& 1 \\
      0 & & \ldots & & 0 & -\alpha_K
   \end{pmatrix}.
%\qquad\quad\mbox{and}\qquad\quad {\boldsymbol{\beta}}_i(t) := \begin{bmatrix} 0 \\ 0 \\ \vdots \\ 0 \\ G(\vartheta_i(t)) \end{bmatrix}.
\end{equation}

As noted in~\cite[Section 3]{EH:2026}, model~\eqref{eq:SDE1} can also be formally rewritten in a non-Markovian form,
eliminating the internal variables $\mathsf{y}=\mathsf{y}(t)$.
Indeed, using the variation-of-constants formula~\cite[Section 3.3]{Mao:2007} for~\eqref{eq:SDE1}
and choosing the zero initial datum $\mathsf{y}(t=0)=0$, we arrive at
\(  \label{eq:nonMarkovian}
   \tot{{\x}_i(t)}{t} = \int_0^t \kappa(t-s) \,G(\vartheta_i(s)) \,\d B_{i}^s
   \qquad \mbox{with} \qquad
   \kappa(t) := \left( e^{tA} \right)_{1,K},
\)
i.e., $\kappa(t)$ is the $(1,K)$-th element of the matrix exponential $e^{tA}$.
This can be evaluated explicitly under the assumption
\begin{equation}
   \alpha_k:=\alpha>0 \qquad\quad \mbox{for all} \quad k\in[K]\,.
\label{eq:samealpha}
\end{equation}
Then $A = -\alpha I + U$, with $U$ the upper-diagonal matrix $U_{k,k+1}=1$ for $k\in[K-1]$.
Since $U$ commutes with the identity matrix, we have
\(  \label{eq:kappa}
   \kappa(t) = e^{-\alpha t} \left( e^{tU} \right)_{1,K} = \frac{e^{-\alpha t} \, t^{K-1}}{(K-1)!}\,.
\)

As noted in \cite[Section 3]{EH:2026}, the non-Markovian formulation \eqref{eq:nonMarkovian} can be used
to establish a biologically meaningful notion of effective memory duration,
i.e., the mean length of the past time interval that influences the current motion of an individual.
This quantity corresponds to the mean of the normalized version of the kernel $\kappa=\kappa(t)$,
viewed as a weighting function in \eqref{eq:nonMarkovian}. %assigning importance to past sensory inputs.
In particular, for $\kappa=\kappa(t)$ given by \eqref{eq:kappa}, we readily calculate
\( \label{eq:MemoryLength}
   \left( \int_0^{+\infty} \kappa(t) \, \d t \right)^{\!\!-1} \left( \int_0^{+\infty} t \, \kappa(t) \, \d t \right) = \frac{K}{\alpha}.
\)
We observe that, for a fixed $K\in\N$, the effective length of the memory is
inversely proportional to the value of $\alpha>0$ in \eqref{eq:samealpha}.
The main goal of this paper is to study how the memory length
influences the pattern formation (aggregation) properties of the system \eqref{eq:SDE1}.
We shall proceed by a linearized stability analysis of the spatially homogeneous steady states
of the corresponding Vlasov--Fokker--Planck equation.

%%%%%%%%%%%%%%%%%%%%%%%%%%%%%%%%%%%%%%%%
\subsection{The Vlasov--Fokker--Planck equation}
In~\cite{EH:2026}, the mean-field description of \eqref{rho_i}--\eqref{eq:SDE0} as $N\to+\infty$ has been formally derived,
resulting in the kinetic Fokker--Planck equation
\(  \label{eq:FP}
  \begin{aligned} 
     \partial_t f  + {\y}^1\cdot \grad_{\x} f 
     \,+  \sum_{k=1}^{K-1}  \varepsilon^{-1}_k \grad_{{\y}^k}\cdot \left( ({\y}^{k+1} - \alpha_k {\y}^k) f \right)  \\
       = \varepsilon_K^{-1} \grad_{{\y}^K}\cdot \left( \alpha_K {\y}^K f + \frac{G(W\conv\rho[f])^2}{2} \grad_{{\y}^K}  f \right),
\end{aligned}
\)
for the particle phase-space density $f = f(t, {\x}, \yy)$, where we introduced the notation
\[
   \yy := \left( y^1,\dots,y^K \right) \in\mathbb R^{dK},
\]
and for each spatial component $\ell\in [d]$ we denote
$\yy_\ell := \left( y^1_\ell,\dots,y^K_\ell \right)^T \in\mathbb R^K$.
The particle number density $\rho[f]$ is the zeroth $\yy$-moment of $f$,
\begin{equation}   \label{def:rho}
   \rho[f] (t,{\x}) := \int_{\R^{dK}}  f(t, {\x}, {\yy})   \, \d {\yy},
\end{equation}
and the convolution $W\conv \varrho$ over the torus $\Omega$ is defined as
\(  \label{def:convolution}
   W\conv \rho[f](t, {\x}) := \int_{\R^d} W({\x}-{\z}) \rho_\mathrm{per}[f]({\z}) \, \d {\z},
\)
where $\rho_\mathrm{per}[f]$ is the periodic extension of $\rho[f]$ to $\R^d$.

To avoid unnecessary technicalities, we again 
fix unit relaxation time scales \eqref{epsare1} in the sequel.
We also drop the index $\ell$ in $\yy_\ell$ and interpret the below formulas as componentwise in $\ell\in[d]$.
Then \eqref{eq:FP} can be written in the compact notation
\begin{equation}
   \label{eq:FPcompact}
   \partial_t f + \y^1\cdot\nabla_\x f = \mathcal L[\rho[f]] f,
\end{equation}
with $\varrho:=\rho[f]$ given by \eqref{def:rho} and
\(  \label{def:L}
   \mathcal L[\varrho] f := -\grad_\yy\cdot\left( A\yy f \right) + \frac{G(W\ast\varrho)^2}{2}\laplace_{y^K} f,
\)
with the matrix $\AK \in \R^{K\times K}$ given by \eqref{def:A}.

%%%%%%%%%%%%%%%%%%%%%%%%%%%%%
\subsection{Properties of the model with no memory} \label{subsec:noMemory}
Setting formally $K:=0$ in \eqref{eq:SDE1}, we obtain the spontaneous particle aggregation model without memory,
\(  \label{eq:noMemory}
   \d {\x}_{i}(t) =  G(\vartheta_i(t)) \,\d {B}_i^t\,,
\)
with $\vartheta_i = \vartheta_i(t)$ given by \eqref{rho_i}.
This model has been proposed and studied in~\cite{BHW:2012} under the name `direct aggregation'.
The formal mean-field limit as $N\to\infty$ has been derived for the particle number density $\varrho = \varrho(t,{\x})$,
\begin{equation}
   \label{eq:FPnoMemory}
     \part{\varrho}{t} = \frac12 \, \laplace\left( G(W\ast\varrho)^2 \varrho \right),
\end{equation}
with the convolution again given by \eqref{def:convolution}.
To obtain an insight into the dynamics of the model, it is instructive to expand the derivative in the right-hand side of~\eqref{eq:FPnoMemory},
\begin{equation*}
   \part{\varrho}{t} =  \frac12 \, \grad\cdot\Big( \varrho \, \grad G(W\ast\varrho)^2 + G(W\ast\varrho)^2 \, \grad \varrho \Big).
\end{equation*}
We observe that the convection term $\grad\cdot\left( \varrho \, \grad G(W\ast\varrho)^2 \right)$ %(the first term on the right-hand side)
induces the eventual formation of aggregates, competing against the smoothing action of the purely diffusive term $\grad\cdot \left(G(W\ast\varrho)^2 \, \grad \varrho \right)$.
Stationary patterns, emerging for suitable choices of the model parameters, result as equilibria balancing these two mechanisms.

A linearized stability analysis of a given constant steady state $\varrho \equiv \varrho_0 > 0$
has been carried out in~\cite{BHW:2012}. It leads to the system of ODEs
\(   \label{Fourier}
    \part{\widehat{\varrho}_n}{t} + |q_n|^2 \frac{G(\varrho_0)}{2}
         \left( G(\varrho_0) + 2G'(\varrho_0)\varrho_0 \widehat{W}_n\right) \widehat{\varrho}_n = 0
\)
for the Fourier transformed perturbation $\tilde\varrho = \tilde\varrho(t, \x)$ of $\varrho_0$,
\[
   \widehat \varrho_n : = \int_\Omega \tilde\varrho(\x) e^{-i q_n\cdot \x}\,\d \x, \qquad
   \widehat W_n : = \int_\Omega W(\x) e^{-i q_n\cdot \x}\,\d \x,
\]
with $q_n := 2\pi n$, $n\in \Z^d$. We note that due to the radial symmetry of $W$, all Fourier coefficients $\widehat W_n$ are real.
The zero mode $n=0$ in \eqref{Fourier} is neutral, reflecting conservation of  the total mass.
With the assumption $G(\varrho_0) > 0$ and $G'(\varrho_0) < 0$,
those modes $n\neq 0$ are linearly unstable for which
\(   \label{eq:stability}
    \widehat{W}_n > - \frac{G(\varrho_0)}{2G'(\varrho_0)\varrho_0} \geq 0 \,.
\)
Since $W\in L^1(\Omega)$, we have $\widehat{W}_n \to 0$ as $|n|\to+\infty$ by Riemann--Lebesgue lemma.
Therefore, all modes with $|n|$ larger than a certain threshold are linearly asymptotically stable.

%%%%%%%%%%%%%%%%%%%%%%%%%%%%%%%%%%%%%%%%
\section{Stability analysis for the Vlasov--Fokker--Planck equation}\label{sec:stability}

In this section we derive spatially homogeneous steady states of the Vlasov--Fokker--Planck equation \eqref{eq:FP}
and investigate their linearized stability with respect to spatially inhomogeneous perturbations.
We then use the results of the analysis to gain an insight into the numerical observations for the discrete model \eqref{rho_i}--\eqref{eq:SDE1}
obtained in Sections 6--7 of \cite{EH:2026}, in particular, to explain the coarsening effect induced by the presence of memory.
%dependence of the number and size of clusters on the length of the memory.

%%%%%%%%%%%%%%%%%%%%%%%%%%%%%%%%%%%%%%%%
\subsection{Spatially homogeneous steady states}
\label{subsec:homogeneous-steady-states}
%For simplicity we set $\varepsilon_k := 1$ for all $k\in[K]$ in the sequel.

\begin{remark}
In \cite[Section 5]{EH:2026}, a stationary measure for \eqref{eq:FP} was considered which is concentrated on the lower-dimensional manifold
$\y^k=\alpha_k^{-1}\y^{k+1}$ for $k\in [K-1]$.
This is obtained in the singular relaxation limit of \eqref{eq:FP} as $\varepsilon_k\to 0^+$ for $k\in [K-1]$,
where the variables $\y^1,\dots,y^{K-1}$ relax instantaneously relative to $\y^K$,
which is Gaussian distributed.
In contrast, in this paper we consider the case of finite $\varepsilon_k>0$ for all $k\in[K]$,
where the homogeneous invariant measure of the Vlasov--Fokker--Planck equation is the full centered Gaussian in all internal variables.
%which we derive in the following Lemma.
\end{remark}

To simplify the analysis, %and without significant loss of generality,
we set $\varepsilon_k:=1$ for all $k\in[K]$ in the rest of the paper.

\begin{lemma}\label{lem:steady}
For every constant $\varrho_0>0$ the Vlasov--Fokker--Planck equation \eqref{eq:FPcompact}
%with $\varepsilon_k := 1$, $k\in[K]$,
admits the spatially homogeneous steady state $f_0(\x,\yy) =  \varrho_0 M_0(\yy)$
with $M_0$ given by
\begin{equation}
\label{eq:M0}
    %M_0(\yy) = \frac{1}{(2\pi)^{dK/2}(\det\Sigma)^{d/2}} \exp\left( -\frac12 \yy^T\Sigma^{-1}\yy \right),
   M_0(\yy) = \frac{1}{(2\pi)^{dK/2}(\det\Sigma)^{d/2}} \exp\left( -\frac12 \sum_{\ell=1}^d \yy_\ell^T\Sigma^{-1}\yy_\ell \right),
\end{equation}
with the covariance matrix
\(  \label{eq:Sigma-integral}
   \Sigma = g_0^2\int_0^\infty e^{tA} \ee_K \ee_K^T e^{tA^T}\,\d t,
\)
where we denoted $g_0:=G(\rho_0)>0$, and we recall that $A\in\R^{K\times K}$ is given by \eqref{def:A},
and $\ee_K=(0,\dots,0,1)^T\in\mathbb R^K$ denotes the $K$-th standard basis vector.

In the special case $\alpha_k=\alpha>0$ for all $k\in[K]$, \eqref{eq:Sigma-integral} evaluates to
\begin{equation}
\label{eq:Sigma-explicit-equal-alpha}
   \Sigma_{ij} = g_0^2 \, \frac{(2K-i-j)!} {(K-i)!(K-j)!(2\alpha)^{2K-i-j+1}} \qquad\mbox{for } i,j \in [K].
\end{equation}
\end{lemma}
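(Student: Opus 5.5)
Since $f_0=\varrho_0 M_0(\yy)$ is independent of $\x$, the transport term $\y^1\cdot\grad_\x f_0$ vanishes. Moreover $\rho[f_0]\equiv\varrho_0$ because $M_0$ is a probability density, and by the normalization \eqref{eq:Wnorm} we get $W\ast\varrho_0=\varrho_0$. Hence the diffusion coefficient in \eqref{def:L} is the constant $g_0^2/2$, and the steady-state equation reduces to the linear stationary Fokker--Planck (Ornstein--Uhlenbeck) equation
\[
   -\grad_\yy\cdot(A\yy M_0)+\frac{g_0^2}{2}\laplace_{y^K}M_0=0 .
\]
The spatial components $\ell\in[d]$ decouple, and the Gaussian \eqref{eq:M0} is a product over $\ell$. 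It therefore suffices to treat a single component $\yy\in\R^K$ with $D:=g_0^2\ee_K\ee_K^T$, i.e.\ to verify $-\grad\cdot(A\yy M)+\tfrac12\grad\cdot(D\grad M)=0$ for $M\propto\exp(-\tfrac12\yy^T\Sigma^{-1}\yy)$.

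Before that, I check that $\Sigma$ is well defined and invertible. Since $A$ is upper triangular with diagonal $-\alpha_k<0$, it is Hurwitz, so the integral \eqref{eq:Sigma-integral} converges. Positive definiteness follows from controllability of the pair $(A,\ee_K)$: the vectors $\ee_K, A\ee_K,\dots,A^{K-1}\ee_K$ form a triangular basis, because $A\ee_k$ has entry $1$ in position $k-1$. Consequently $v^T\Sigma v=g_0^2\int_0^\infty|\ee_K^Te^{tA^T}v|^2\,\d t>0$ for $v\neq0$. Integrating $\frac{\d}{\d t}\bigl(e^{tA}DE^{tA^T}\bigr)$, more precisely $\frac{\d}{\d t}\bigl(e^{tA}De^{tA^T}\bigr)$, over $(0,\infty)$ yields the Lyapunov equation $A\Sigma+\Sigma A^T+D=0$.

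With $\grad M=-\Sigma^{-1}\yy\,M$, a direct computation gives
\[
   -\grad\cdot(A\yy M)+\tfrac12\grad\cdot(D\grad M)
   =M\Bigl[-\operatorname{tr}A+\yy^TA^T\Sigma^{-1}\yy-\tfrac12\operatorname{tr}(D\Sigma^{-1})+\tfrac12\yy^T\Sigma^{-1}D\Sigma^{-1}\yy\Bigr].
\]
Multiplying the Lyapunov equation by $\Sigma^{-1}$ on both sides gives $\Sigma^{-1}A+A^T\Sigma^{-1}+\Sigma^{-1}D\Sigma^{-1}=0$. This kills the quadratic part after symmetrization, since $\yy^TA^T\Sigma^{-1}\yy=\tfrac12\yy^T(A^T\Sigma^{-1}+\Sigma^{-1}A)\yy$. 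Multiplying the Lyapunov equation by $\Sigma^{-1}$ on the left and taking the trace gives $2\operatorname{tr}A+\operatorname{tr}(D\Sigma^{-1})=0$, which kills the constant part. The normalization constant in \eqref{eq:M0} is the standard Gaussian one.

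For the explicit formula \eqref{eq:Sigma-explicit-equal-alpha}, I use $e^{tA}=e^{-\alpha t}e^{tU}$. Then $(e^{tA}\ee_K)_i=e^{-\alpha t}\,t^{K-i}/(K-i)!$, as in the computation of \eqref{eq:kappa}. Hence
\[
   \Sigma_{ij}=\frac{g_0^2}{(K-i)!(K-j)!}\int_0^\infty t^{2K-i-j}e^{-2\alpha t}\,\d t,
\]
and the Gamma integral $\int_0^\infty t^m e^{-2\alpha t}\,\d t=m!/(2\alpha)^{m+1}$ gives the claim.

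The main obstacle is mild: it is the controllability/invertibility of $\Sigma$, together with the bookkeeping of the symmetrization in the quadratic form. Both are handled as above.
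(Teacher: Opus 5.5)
Your proof is correct. Its skeleton matches the paper's: you reduce to the stationary Ornstein--Uhlenbeck equation in $\yy$, obtain the Lyapunov equation $A\Sigma+\Sigma A^T+g_0^2\ee_K\ee_K^T=0$ by integrating $\frac{\d}{\d t}\bigl(e^{tA}\ee_K\ee_K^T e^{tA^T}\bigr)$ over $(0,\infty)$, and evaluate \eqref{eq:Sigma-explicit-equal-alpha} via $e^{tA}=e^{-\alpha t}e^{tU}$ and a Gamma integral.

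The difference is in the middle step. The paper never checks directly that the Gaussian solves the stationary equation. It recognizes the equation as the forward equation of \eqref{eq:OU} and cites the standard fact that a stable Ornstein--Uhlenbeck process has a unique invariant centered Gaussian law, whose covariance is the positive definite Lyapunov solution. You instead verify stationarity by hand: the quadratic part cancels by the Lyapunov equation conjugated with $\Sigma^{-1}$, and the constant part by its trace after left multiplication by $\Sigma^{-1}$. You also prove $\Sigma\succ0$ from the Kalman condition, which is exactly the content of Lemma~\ref{lem:Aell-eK}; the paper only invokes that lemma later, in the Appendix.

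What each route buys: your version is self-contained and makes explicit the nondegeneracy needed for \eqref{eq:M0} to make sense, since stability of $A$ alone only yields $\Sigma\succeq0$. The paper's citation, in exchange, also gives uniqueness of the invariant law (not claimed in the lemma). It also identifies $M_0$ with the stationary law of the process used later in the Feynman--Kac computation of the dispersion relation.

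The one step you compress is the passage from $v^T\Sigma v=0$ to $v^TA^j\ee_K=0$ for all $j$. It follows by continuity and repeated differentiation of $t\mapsto v^Te^{tA}\ee_K$ at $t=0$.
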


\begin{proof}
We first note that $M_0=M_0(\yy)$ is a probability density on $\R^{dK}$,
so that $\rho[f_0] = \varrho_0$. %the particle number density of $f_0 =  \rho_0 M_0$ is $\rho_0$.
Moreover, due to the normalization \eqref{eq:Wnorm} we have $W\conv\varrho_0=\varrho_0$,
and therefore the noise amplitude in \eqref{def:L} becomes the constant $g_0=G(\varrho_0)>0$.
Moreover, since $f_0$ is independent of $x\in\Omega$, the transport term $y^1\cdot \nabla_x f_0$ in \eqref{eq:FPcompact} vanishes.
We thus obtain the stationary equation $\mathcal{L}[\varrho_0] M_0 = 0$ with $\mathcal{L}$ given by \eqref{def:L}, i.e.,
\begin{equation*} %\label{eq:stationary-internal-FP}
  -\sum_{k=1}^{K-1}   \nabla_{y^k}\cdot\left(  \left(y^{k+1}-\alpha_k y^k\right)M_0 \right)
  +  \nabla_{y^K}\cdot\left(  \alpha_K y^K M_0  \right)
  +  \frac{g_0^2}{2}\Delta_{y^K}M_0 = 0.
\end{equation*}
This is the stationary Fokker--Planck equation associated with the $d$ identical Ornstein--Uhlenbeck systems
\begin{equation}  \label{eq:OU}
   \d\yy_\ell = \AK\yy_\ell \,\d t + g_0 \ee_K \,\d B_{\ell,t}, \qquad \ell\in [d].
\end{equation}
Since the eigenvalues $-\alpha_1,\dots,-\alpha_K$ of $A$ are all negative, the matrix $A$ is stable
and the Ornstein--Uhlenbeck process \eqref{eq:OU} has the unique invariant centered Gaussian law given by \eqref{eq:M0},
see, e.g., \cite{Mao:2007, Oksendal},
with $\Sigma$ the unique positive definite solution of the Lyapunov equation
\begin{equation}
\label{eq:Lyapunov-Sigma}
   A\Sigma+\Sigma A^T+g_0^2 \ee_K \ee_K^T=0.
\end{equation}
Since the matrix $A$ is stable, the integral \eqref{eq:Sigma-integral} is well defined.
We verify that it solves the Lyapunov equation \eqref{eq:Lyapunov-Sigma}.
Indeed,
\[
   \begin{aligned}
   A\Sigma+\Sigma A^T &= g_0^2\int_0^\infty \left( Ae^{tA}e_Ke_K^T e^{tA^T} +
     e^{tA}e_Ke_K^T e^{tA^T}A^T \right) \d t  \\
    &=
   g_0^2\int_0^\infty \frac{\d}{\d t} \left( e^{tA}e_Ke_K^T e^{tA^T} \right) \d t  \\
   &=
   g_0^2  \left[  e^{tA}e_Ke_K^T e^{tA^T}  \right]_{t=0}^{t=\infty}.
   \end{aligned}
\]
Since $e^{tA}\to0$ as $t\to\infty$, we have
\[
    A\Sigma+\Sigma A^T = -g_0^2 e_Ke_K^T,
\]
so that $\Sigma$ given by \eqref{eq:Sigma-integral} indeed solves \eqref{eq:Lyapunov-Sigma}.
%Equivalently,
%\begin{equation}  \label{eq:Sigma-integral}   \Sigma = g_0^2 \int_0^\infty e^{tA}e_Ke_K^T e^{tA^T},dt . \end{equation}

In the special case $\alpha_k := \alpha>0$ for all $k\in[K]$, we have $\AK=-\alpha I+U$,
where $U$ is the upper shift matrix with elements $U_{k,k+1}=1$ and zero otherwise.
%In particular, all eigenvalues of matrix $\AK$ are equal to $-\alpha$.}
Then $U$ commutes with the identity matrix, and we have
\[
    e^{tA} =  e^{-\alpha t} \, e^{t U} \,.
\]
Moreover, $U$ being upper diagonal, its $K$-th power $U^K$ is the zero matrix, so that
\[
   e^{t U} = \sum_{k=0}^{K-1} \frac{t^k U^k}{k!}.
\]
Consequently, the $(j,K)$-th element is
\(   \label{eq:exptA}
  \left(e^{tA}\right)_{jK} = e^{-\alpha t}\frac{t^{K-j}}{(K-j)!}, \qquad j \in [K].
\)
Substituting this formula into \eqref{eq:Sigma-integral} gives 
\[
   \Sigma_{ij} = g_0^2\int_0^\infty \left(e^{tA}\right)_{iK} \left(e^{tA}\right)_{jK} \d t =
     \frac{g_0^2}{(K-i)! (K-j)!} \int_0^\infty e^{-2\alpha t} \, t^{2K-i-j} \,\d t,
\]
and a straightforward calculation gives \eqref{eq:Sigma-explicit-equal-alpha}.
\end{proof}

%%%%%%%%%%%%%%%%%%%%%%%%%%%%%%%%%%%%%%%%%%
\subsection{Linearized stability analysis of the homogeneous steady state}\label{subsec:linearized-stability}
We now study the linearized stability of the spatially homogeneous steady state
$f_0(\yy) =  \varrho_0 M_0(\yy)$ of \eqref{eq:FPcompact}, with a fixed constant $\varrho_0>0$ and $M_0$ given by \eqref{eq:M0}.
We denote
\[
    g_0:=G(\varrho_0),  \qquad  g_1:=G'(\varrho_0),
\]
and recall that due to the normalization \eqref{eq:Wnorm}, we have $W\conv\varrho_0=\varrho_0$.
We denote $\mathcal L_0 := \mathcal L[\varrho_0]$  the linear operator
\(  \label{def:L0}
   \mathcal L_0 h = -\grad_\yy\cdot\left( A\yy h \right) + \frac{g_0^2}{2}\laplace_{y^K} h,
\)
and observe that, by construction, $\mathcal L_0 M_0=0$.

For $\eta>0$ we consider a total mass preserving perturbation $h=h(t,x,\yy)$ of the homogeneous state $\varrho_0 M_0$,
\(  \label{eq:pert}
   f(t,\x,\yy) = \varrho_0 M_0(\yy)+\eta h(t,\x,\yy) \qquad\mbox{with } \int_\Omega \int_{\R^{dK}} h(t,\x,\yy) \, \d\yy\d\x = 0.
\)
Taylor expansion of the nonlocal diffusion coefficient, using $\rho[f] = \varrho_0 + \eta\rho[h]$, gives
\[
   G(W\conv \rho[f])^2 = g_0^2 + 2\eta g_0g_1 W\conv \rho[h] + \mathcal O(\eta^2).
\]
Substituting this expansion into \eqref{eq:FPcompact} and retaining only terms of order $\eta$, we obtain
\begin{equation} \label{eq:linearized-h}
   \partial_t h + \y^1\cdot\nabla_\x h = \mathcal L_0 h + \rho_0\, g_0\, g_1\, (W\conv \rho[h])\, \Delta_{\y^K}M_0. %+ \mathcal{O}(\eta^2}
\end{equation}
Here we used the fact that $W\conv \rho[h]$ depends only on $\x$.
%Using \eqref{eq:M0}, a simple calculation yields
%$$\Delta_{\y^K}M_0 (\yy) = \left[ \sum_{\ell=1}^d \left( \ee_K^T\Sigma^{-1}\yy_\ell \right)^2 - d \,\ee_K^T\Sigma^{-1} \ee_K \right] M_0 (\yy). $$

We now expand \eqref{eq:linearized-h} into a Fourier series in the $x$-variable on the torus $\Omega$.
Let $q_n:=2\pi n$ with $n\in\mathbb Z^d$ and denote
\[
   %h(t,\x,\yy) = \sum_{n\in\mathbb Z} \widehat h_n(t,\yy) e^{iq_n\cdot \x}, \qquad r(t,\x) = \sum_{n\in\mathbb Z}\widehat r_n(t)e^{iq_n\cdot \x},
   \widehat h_n(t,\yy) := \int_\Omega h(t,\x,\yy) e^{-iq_n\cdot \x} \d\x. %, \qquad    \widehat r_n(t) := \int_{\mathbb R^{dK}}\widehat h_n(t,\yy)\,\d\yy,
\]
%and
%\[ \widehat W_n :=  \int_\Omega W(\x) e^{-iq_n\cdot \x} \d\x, \qquad W\conv r = \sum_{n\in\mathbb Z}\widehat W_n \widehat r_n(t) e^{iq_n\cdot\x}. \]
%Thus each Fourier mode evolves independently.
For $n\in\mathbb Z$, define the operator
\begin{equation}
\label{def:Bn}
   \mathcal B_n := \mathcal L_0-iq_n\cdot \y^1.
\end{equation}
Then a projection of \eqref{eq:linearized-h} onto the $n$-th Fourier mode gives
\begin{equation}
   \label{eq:linearized-Fourier-mode}
   (\partial_t - \mathcal B_n ) \widehat h_n = \rho_0\, g_0\, g_1 \widehat W_n\, \rho[\widehat h_n]\, \Delta_{y^K}M_0.
\end{equation}
The zero mode $n=0$ corresponds to spatially homogeneous perturbations. % with $\rho[\widehat h_0] = 0$.
%Since the total mass is conserved, admissible perturbations in the fixed-mass class must satisfy $\widehat r_0=0$
Spatial inhomogeneities can therefore only arise from modes with $n\neq0$, which we assume in the sequel.
To detect unstable modes,  we look for solutions of \eqref{eq:linearized-Fourier-mode} of the form
\[
   \widehat h_n(t,\yy)=e^{\lambda t}\varphi_n(\yy), %  \qquad \widehat r_n(t)=e^{\lambda t} \psi_n,
\]
where $\lambda\in\C$ with $\operatorname{Re}\lambda >0$.
Denoting $\psi_n := \rho[\varphi_n]$ and substituting into \eqref{eq:linearized-Fourier-mode} gives
\begin{equation}
   \label{eq:eigenvalue-problem}
   (\lambda - \mathcal B_n)\varphi_n = \rho_0 g_0 g_1 \widehat W_n \psi_n \Delta_{y^K}M_0.
\end{equation}
We observe that if $\psi_n=0$, the right-hand side of \eqref{eq:eigenvalue-problem} vanishes and $\lambda$ is an eigenvalue of the operator $\mathcal B_n$,
with an eigenfunction with vanishing particle number density.
Again, such modes cannot give rise to aggregation instabilities in the spatial variable,
and we thus impose $\psi_n\neq0$.
%Such internal (kinetic) modes are not generated by the nonlocal feedback term, i.e., the right-hand side of \eqref{eq:linearized-Fourier-mode}.
%The full linear stability of the $n$-th Fourier mode therefore requires two conditions: first, all roots of the dispersion relation associated with $\psi_n\neq0$ must satisfy $\operatorname{Re}\lambda<0$; second, all eigenvalues of the free kinetic operator $\mathcal B_n$ with eigenfunctions satisfying $P\varphi_n=0$ must also lie in the open left half-plane. For the Ornstein--Uhlenbeck chain with $\alpha_k>0$, the latter modes are expected to be damped by the hypocoercive relaxation of the free kinetic dynamics.
According to Lemma \ref{lem:resolvent} of the Appendix, the resolvent set of the operator $\mathcal B_n$ contains the open right complex half-plane.
Therefore, any $\lambda\in\C$ with $\operatorname{Re}\lambda >0$ belongs to the resolvent set of $\mathcal B_n$
and \eqref{eq:eigenvalue-problem} can be inverted,
\[
   \varphi_n = \rho_0 g_0 g_1 \widehat W_n \psi_n (\lambda-\mathcal B_n)^{-1}\Delta_{y^K}M_0.
\]
Integrating with respect to $\yy\in\R^{dK}$, using $\psi_n := \rho[\varphi_n]$, and dividing by $\psi_n \neq 0$, we arrive at the dispersion relation
\begin{equation}
   \label{eq:dispersion-general}
    \rho_0\, g_0\, g_1 \widehat W_n \left\langle 1, (\lambda-\mathcal B_n)^{-1}\Delta_{y^K}M_0 \right\rangle = 1,
\end{equation}
with the bilinear $L^2(\R^{dK})$-pairing
\(   \label{eq:scalarProd}
   \langle u,v\rangle := \int_{\mathbb \R^{dK}} u(\yy)v(\yy) \,\d \yy.
\)
%The homogeneous steady state \eqref{eq:homogeneous-ansatz} is linearly asymptotically stable if for every $n\neq 0$ all roots $\lambda\in\C$ of \eqref{eq:dispersion-general} have negative real parts,
%and if the remaining spectrum of $\mathcal B_n$ also lies in the open left half-plane. Conversely, if for some $n\neq 0$ the dispersion relation has a root with positive real part,
%then the homogeneous steady state is linearly unstable with respect to perturbations of spatial wave number $q_n$.
With the resolvent formula %(Laplace transform)
\[
   (\lambda-\mathcal B_n)^{-1} = \int_0^\infty e^{-\lambda t}e^{t\mathcal B_n} \,\d t,
\]
we have
\begin{equation}
   \label{eq:Hn-semigroup}
    \left\langle 1, (\lambda-\mathcal B_n)^{-1}\Delta_{y^K}M_0 \right\rangle =
     \int_0^\infty e^{-\lambda t} \left\langle 1, e^{t\mathcal B_n}\Delta_{y^K}M_0 \right\rangle \d t.
\end{equation}
Using the adjoint semigroup and integration by parts, we write
\[
   \left\langle 1, e^{t\mathcal B_n}\laplace_{y^K}M_0 \right\rangle
      = \left\langle e^{t\mathcal C_n}1, \laplace_{y^K}M_0 \right\rangle
      = \left\langle \laplace_{y^K} e^{t\mathcal C_n}1, M_0 \right\rangle,
\]
with the backward Kolmogorov operator $\mathcal C_n$, i.e., formal adjoint of $\mathcal B_n$ with respect to \eqref{eq:scalarProd},
\(   \label{def:Cn}
  \mathcal C_n := (A\yy)\cdot\nabla_\yy + \frac{g_0^2}{2}\Delta_{y^K} - iq_n\cdot y^1.
\)
%where $A$ is the drift matrix of the internal Ornstein--Uhlenbeck chain.
By the Feynman--Kac formula \cite[Section 2.8]{Mao:2007},
\[
   e^{t\mathcal C_n}1(\yy)=\mathbb E \left[ \exp\left(-iq_n\cdot\int_0^t Y_s^1\,ds\right) \Bigg| \, Y_0=\yy \right],
\]
where $(Y_s)_{s\geq 0}$ is the Ornstein--Uhlenbeck process \eqref{eq:OU} started from $Y_0=\yy$.
It is given, componentwise, by the variation-of-constants formula
\[
    Y_s=e^{sA}\yy+g_0\int_0^s e^{(s-\tau)A} \ee_K\,\d B_\tau .
\]
Consequently,
\(  \label{eq:Cj}
   \int_0^t Y_s^1\,\d s = \sum_{j=1}^K C_j(t) y^j + Z_t,\qquad\mbox{with } C_j(t):=\int_0^t(e^{s A})_{1j}\,\d s,
\)
and $Z_t$ is %a centered Gaussian
independent of $\yy$.
%Hence, $$e^{t\mathcal C_n}1(\yy)=c(t,y^1,\dots,y^{K-1})\exp\left(-iq_n\cdot B_K(t)y^K\right),$$
%$with $c$ independent of $y^K$.
We then readily have
\[
   \laplace_{y^K} e^{t\mathcal C_n}1 = -|q_n|^2 C_K(t)^2 e^{t\mathcal C_n}1 .
\]
Finally, we note that $\int_0^t Y_s^1\,\d s$ is a centered Gaussian under the stationary law $M_0$, since
\[
    \mathbb E_{M_0}\int_0^tY_s^1\,\d s = \int_0^t \mathbb E_{M_0} Y_s^1\,\d s   =    0.
\]
Then the Fourier transform formula for a Gaussian measure \cite[Appendix A]{Oksendal} gives
\[
   \left\langle e^{t\mathcal C_n}1,M_0\right\rangle
   =  \mathbb E_{M_0} \left[ \exp\left(-iq_n\cdot\int_0^t Y_s^1\,\d s\right) \right]
   =  \exp\left(-\frac{|q_n|^2}{2}V(t)\right),
\]
with the variance
\(   \label{def:V}
   V(t) := \operatorname{Var}_{M_0} \left( \int_0^t Y^1(s)\,\d s \right),
\)
where $(Y_s)_{s\geq 0}$ is one spatial component of the Ornstein--Uhlenbeck process \eqref{eq:OU}
with $Y_0$ distributed according to $M_0$.
Inserting into \eqref{eq:Hn-semigroup}, the dispersion relation \eqref{eq:dispersion-general} takes the form
%can be equivalently written as
\begin{equation}
\label{eq:disp0}
    -\rho_0\, g_0 \, g_1 \widehat W_n |q_n|^2 \int_0^\infty e^{-\lambda t} C_K(t)^2 \exp\left( -\frac{|q_n|^2}{2}V(t) \right) \,\d t =  1.
\end{equation}
We conclude that the Fourier mode $n\in\Z^d$ induces a spatially inhomogeneous instability
if \eqref{eq:disp0} admits a root $\lambda\in\C$ with positive real part.

%%%%%%%%%%%%%%%%%%%%%%%%%%%%%%%%%%%%%%%%%%
\subsection{Analysis of the dispersion relation for equal $\alpha_k$}\label{subsec:dispersionAnalysis}
We now analyze \eqref{eq:disp0} in the special case $\alpha_k=\alpha>0$ for all $k\in[K]$.
Using formula \eqref{eq:exptA} for $\left(e^{sA}\right)_{1K}$,
we write $C_K=C_K(t)$, defined in \eqref{eq:Cj}, in the form
\begin{equation}
   \label{eq:CK-equal-alpha}
     C_K(t) = \frac{1}{(K-1)!} \int_0^t e^{-\alpha s}s^{K-1} \,\d s
        = \frac{1}{\alpha^K} \left[ 1-e^{-\alpha t} \sum_{j=0}^{K-1}\frac{(\alpha t)^j}{j!} \right].
\end{equation}

Next we prove a useful scaling relation for $V=V(t)$ given by \eqref{def:V}.

\begin{lemma}
Denote $V_\alpha=V_\alpha(t)$ the variance given by \eqref{def:V},
with $(Y_s)_{s\geq 0}$ being one spatial component of
the Ornstein--Uhlenbeck process \eqref{eq:OU} with parameters $\alpha>0$ and $g_0>0$.
%and with $Y_0$ distributed according to $M_0$.
Then
\(  \label{eq:Valpha}
   V_{\alpha}(t)  =  g_0^2\alpha^{-(2K+1)} \widetilde V(\alpha t),
\)
where
\(   \label{def:widetildeV}
   \widetilde V(\tau) =  \operatorname{Var}_{\widetilde M_0}  \left(  \int_0^{\tau}\widetilde Y_s^1\, \d s \right),
\)
and $(\widetilde Y_s)_{s\geq 0}$ is one spatial component of the Ornstein--Uhlenbeck process \eqref{eq:OU}
with parameters $\alpha:=1$ and $g_0:=1$.
\end{lemma}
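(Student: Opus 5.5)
The plan is to prove the scaling relation by a pathwise change of variables in the stationary Ornstein--Uhlenbeck process \eqref{eq:OU}, which has $A=-\alpha I+U$. The scaling has two pieces: time is rescaled by $\alpha$, and the chain coordinates are rescaled by powers of $\alpha$ so that the drift matrix becomes $-I+U$. The factor $g_0$ only multiplies the noise. By linearity, $Y$ is then $g_0$ times the process with unit noise amplitude, and variances pick up $g_0^2$. Since the stationary law $M_0$ with covariance $\Sigma$ from \eqref{eq:Sigma-integral} scales by $g_0^2$ in the same way, we may set $g_0=1$ from now on.

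Given a solution $Y$ of \eqref{eq:OU} with parameter $\alpha$ and $g_0=1$, define
\[
\widetilde Y^k_\tau := \alpha^{K-k+1/2}\, Y^k_{\tau/\alpha}, \qquad k\in[K],
\]
together with the rescaled Brownian motion $\widetilde B_\tau := \alpha^{1/2} B_{\tau/\alpha}$. We then verify the equations directly.

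For $k<K$,
\[
\d\widetilde Y^k_\tau = \alpha^{K-k+1/2}\alpha^{-1}\big(-\alpha Y^k + Y^{k+1}\big)\d\tau = \big(-\widetilde Y^k+\widetilde Y^{k+1}\big)\d\tau,
\]
because $\alpha^{K-k-1/2}Y^{k+1} = \widetilde Y^{k+1}$.

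For $k=K$,
\[
\d\widetilde Y^K_\tau = -\widetilde Y^K\d\tau + \alpha^{1/2}\,\d B_{\tau/\alpha} = -\widetilde Y^K\d\tau + \d\widetilde B_\tau .
\]
So $\widetilde Y$ is the unit-parameter Ornstein--Uhlenbeck process. Its stationary law must be $\widetilde M_0$, because the invariant law is unique (Lemma~\ref{lem:steady}) and the image of a stationary process under this deterministic linear map remains stationary. As a cross-check, \eqref{eq:Sigma-explicit-equal-alpha} gives $\Sigma_{ij}\propto\alpha^{-(2K-i-j+1)}$, which agrees with the rescaling $\alpha^{(K-i+1/2)+(K-j+1/2)}\Sigma_{ij}$ being $\alpha$-independent.

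The last step is to compute the time integral:
\[
\int_0^t Y^1_s\,\d s = \alpha^{-(K-1/2)}\int_0^t \widetilde Y^1_{\alpha s}\,\d s = \alpha^{-(K+1/2)}\int_0^{\alpha t}\widetilde Y^1_\sigma\,\d\sigma .
\]
Squaring the prefactor gives $\alpha^{-(2K+1)}$, so taking variances yields \eqref{eq:Valpha} after restoring $g_0^2$. The main point of care is this bookkeeping of exponents, in particular the half-integer power that comes from the Brownian scaling, together with the justification that the initial law transforms correctly. Alternatively, one can avoid stochastic calculus entirely by writing $V(t)=\int_0^t\!\int_0^t \operatorname{Cov}(Y^1_s,Y^1_r)\,\d s\,\d r$ with $\operatorname{Cov}(Y_s,Y_r)=e^{(s-r)A}\Sigma$ for $s\ge r$. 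Using \eqref{eq:exptA}-type formulas $e^{sA}=e^{-\alpha s}e^{sU}$ and the explicit $\Sigma$, the substitution $s\mapsto s/\alpha$ gives the same scaling directly.
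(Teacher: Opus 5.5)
Your proposal is correct and follows essentially the paper's argument. The paper verifies the same pathwise rescaling $Y^k_t = g_0\alpha^{-(K-k+1/2)}\widetilde Y^k_{\alpha t}$ componentwise in the SDE and then substitutes in the time integral; you run the map in the inverse direction and factor out $g_0$ first. Your check that the stationary law is carried to $\widetilde M_0$ (via uniqueness of the invariant law, or the $\alpha$-scaling of $\Sigma_{ij}$) makes explicit a point the paper leaves implicit.
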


\begin{proof}
%Let us denote $(\widetilde Y_s)_{s\geq 0}$ the Ornstein--Uhlenbeck process \eqref{eq:OU} with parameters $\alpha:=1$ and $g_0:=1$.
We claim that the processes $Y_t$ and $\widetilde Y_{\alpha t}$ are related by the following scaling relation,
\(   \label{eq:scaling}
    Y_t^k  = g_0\alpha^{-(K-k+1/2)}\widetilde Y_{\alpha t}^k \qquad\mbox{for } k\in [K].
\)
%with $(Y_s)_{s\geq 0}$ being the Ornstein--Uhlenbeck process \eqref{eq:OU} with parameter $\alpha>0$.
Indeed, for $k\in[K-1]$,
\[
\begin{aligned}
    \d Y_t^k
    &=   g_0\alpha^{-(K-k+1/2)}\,\d \widetilde Y_{\alpha t}^k \\
    &=   g_0\alpha^{-(K-k+1/2)}  \left(\widetilde Y_{\alpha t}^{k+1}-\widetilde Y_{\alpha t}^k \right) \,  \alpha \d t \\
 %   &= \left(  g_0\alpha^{-(K-(k+1)+1/2)}\widetilde Y_{\alpha t}^{k+1} -    \alpha g_0\alpha^{-(K-k+1/2)}\widetilde Y_{\alpha t}^k    \right)\,d t \\
    &=   \left( Y_t^{k+1} - \alpha Y_t^k \right) \,\d t .
\end{aligned}
\]
For $k=K$, using $\d B_{\alpha t} = \sqrt{\alpha}\,\d B_t$, we get
\[
\begin{aligned}
    \d Y_t^K
    &=  g_0\alpha^{-1/2}\,\d\widetilde Y_{\alpha t}^K \\
    &=   g_0\alpha^{-1/2}
    \left(  -\widetilde Y_{\alpha t}^K\,\alpha\,\d t  +  \d B_{\alpha t}  \right) \\
    &=   -\alpha Y_t^K\,\d t+g_0\,\d B_t,
\end{aligned}
\]
%In particular, $Y_t^1  =    g_0\alpha^{-(K-1/2)}\widetilde Y_{\alpha t}^1$.
so that \eqref{eq:scaling} holds. Using it with $k=1$, we have
\[
    \int_0^t Y_s^1\,\d s =   g_0\alpha^{-(K-1/2)}   \int_0^t\widetilde Y_{\alpha s}^1\,\d s 
    = g_0\alpha^{-(K+1/2)}  \int_0^{\alpha t}\widetilde Y_\tau^1\, \d\tau.
\]
Taking the variance gives
\[
    V_{\alpha}(t)  =  \operatorname{Var}_{M_0} \left( \int_0^t Y^1(s)\,\d s \right)
      = g_0^2\alpha^{-(2K+1)}  \operatorname{Var}_{\widetilde M_0}
      \left(  \int_0^{\alpha t}\widetilde Y_\tau^1\, \d\tau \right),
\]
where $\widetilde M_0$ is the invariant Gaussian law of the system with $\alpha=1$, $g_0=1$.
This concludes \eqref{eq:Valpha}.
\end{proof}

We now rewrite the dispersion relation \eqref{eq:disp0} in the form
\(  \label{eq:general-K-dispersion-alpha}
   -\rho_0 g_0 g_1\widehat W_n |q_n|^2 I_n(\alpha;\lambda) = 1,
\)
with
\[
   I_n(\alpha;\lambda) := \int_0^\infty e^{-\lambda t} C_K(t)^2 \exp\left( -\frac{|q_n|^2}{2}V_\alpha(t) \right) \,\d t.
\]
We also rewrite $C_K=C_K(t)$ given by \eqref{eq:CK-equal-alpha} as $C_K(t) = \alpha^{-K} F_K(\alpha t)$ with
\(   \label{def:FK}
   F_K(\tau) :=  \frac{1}{(K-1)!} \int_0^{\tau} e^{-s}s ^{K-1} \,\d s.
\)
%The function $F_K$ is increasing, satisfies $F_K(0)=0$, and converges to $1$ as $\tau\to+\infty$.
Then the change of coordinates $\tau = \alpha t$, using \eqref{eq:Valpha}, gives
\(  \label{eq:InK-alpha-scaled}
    I_n(\alpha;\lambda)  =  \alpha^{-(2K+1) }\int_0^\infty e^{-\frac{\lambda \tau}{\alpha}}  F_K(\tau)^2 \exp\left(-\frac{\gamma_n}{\alpha^{2K+1}} \widetilde V(\tau) \right) \,\d\tau
    \qquad\mbox{with }
    \gamma_n := \frac{|q_n|^2 g_0^2}{2}.
\)

\begin{lemma}\label{lem:I_lambda}
For any $\alpha>0$, we have $I_n(\alpha;\lambda) <+\infty$ for every $\lambda\in\C$ with $\Re\lambda > -\gamma_n \alpha^{-2K}$.
Moreover, for any fixed $\alpha>0$, $I_n(\alpha;\lambda)$ is a decreasing function of the real variable $\lambda\in (-\gamma_n\alpha^{-2K}, +\infty)$, and
\[
   \lim_{\lambda\to -\gamma_n\alpha^{-2K}} I_n(\alpha;\lambda) = +\infty, \qquad \lim_{\lambda\to+\infty} I_n(\alpha;\lambda) =  0.
\]
\end{lemma}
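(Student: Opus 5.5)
The plan is to reduce all three assertions to one asymptotic fact: the variance $\widetilde V$ from \eqref{def:widetildeV} grows linearly with unit slope, up to a bounded error. Concretely, I will show that there is a constant $c\ge 0$ such that
\[
   \tau - c \;\le\; \widetilde V(\tau) \;\le\; \tau + c \qquad\mbox{for all } \tau\ge 0 .
\]
To prove this, let $R(u):=\mathbb E_{\widetilde M_0}\big[\widetilde Y^1_0\widetilde Y^1_u\big]$ be the stationary autocovariance of the first component for $\alpha=g_0=1$. By stationarity,
\[
   \widetilde V(\tau)=2\int_0^\tau(\tau-u)R(u)\,\d u
   = \tau\cdot 2\!\int_0^\infty\! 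R(u)\,\d u \;-\; 2\!\int_0^\infty\!\min(u,\tau)R(u)\,\d u .
\]
The autocovariance is $R(u)=(e^{uA}\Sigma)_{11}$, and $A$ is stable by Lemma~\ref{lem:steady}. Hence $R$ decays exponentially and $\int_0^\infty u|R(u)|\,\d u<\infty$, so the last term is bounded uniformly in $\tau$.

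It remains to identify the slope $2\int_0^\infty R$. This is the spectral density of $\widetilde Y^1$ at frequency zero, i.e.\ $|H(0)|^2$, where $H(\omega)=(i\omega+1)^{-K}$ is the transfer function from the noise to $\widetilde Y^1$. Since $|H(0)|^2=1$, the slope is $1$. Alternatively, the same slope follows from the Lyapunov equation \eqref{eq:Lyapunov-Sigma} together with $\int_0^\infty e^{uA}\,\d u=-A^{-1}$ and $(A^{-1})_{1K}=\pm 1$. I expect this identification of the linear growth rate, rather than the finiteness of the remainder, to be the only genuinely computational step.

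With the two-sided bound in hand, I treat the integrand in \eqref{eq:InK-alpha-scaled}. Since $0\le F_K\le 1$ and $F_K(\tau)\to1$, its modulus is bounded above by
\[
   e^{\gamma_n c/\alpha^{2K+1}}\exp\!\Big(-\tfrac{\tau}{\alpha}\big(\Re\lambda+\gamma_n\alpha^{-2K}\big)\Big)
\]
and, for $\tau\ge1$, bounded below by a positive constant times $\exp\!\big(-\tfrac{\tau}{\alpha}(\Re\lambda+\gamma_n\alpha^{-2K})\big)$. The upper bound gives $I_n(\alpha;\lambda)<\infty$ whenever $\Re\lambda>-\gamma_n\alpha^{-2K}$.

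For real $\lambda$ in this range, the integrand is positive and strictly decreasing in $\lambda$, so $I_n(\alpha;\cdot)$ is decreasing. Writing $\delta:=\lambda+\gamma_n\alpha^{-2K}\downarrow0$, the lower bound gives $I_n\gtrsim\int_1^\infty e^{-\delta\tau/\alpha}\,\d\tau=\alpha e^{-\delta/\alpha}/\delta\to+\infty$. Finally, as $\lambda\to+\infty$ the integrand tends to zero pointwise and is dominated by its value at, say, $\lambda=0$. Dominated convergence then yields $I_n(\alpha;\lambda)\to0$.
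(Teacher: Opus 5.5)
Your proof is correct and follows essentially the paper's route: both reduce everything to $\widetilde V(\tau)=\tau+\mathcal O(1)$ via the stationary autocovariance, and the paper obtains the slope $2\int_0^\infty R_K=1$ as $\bigl(\int_0^\infty F_K'\bigr)^2$ from the moving-average representation, which is exactly your $|H(0)|^2$. Your uniform two-sided bound and explicit lower estimate for the blow-up at $-\gamma_n\alpha^{-2K}$ spell out slightly more carefully what the paper states only as an asymptotic equivalence.
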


\begin{proof}
We show that
\[
   F_K(\tau)^2 \exp\left(-\frac{\gamma_n}{\alpha^{2K+1}} \widetilde V(\tau) \right) \asymp \exp\left( -\frac{\gamma_n}{\alpha^{2K+1}} \tau\right) \qquad\mbox{as } \tau\to+\infty.
\]
From \eqref{def:FK} it immediately follows that $0 < F_K(\tau) <1$ for $\tau>0$, and $\lim_{\tau\to+\infty} F_K(\tau) = 1$.
It remains to prove that
\(   \label{eq:wVas}
   \widetilde V(t) = t + \mathcal{O}(1) \qquad \mbox{as } t\to+\infty.
\)
Let $(\widetilde Y_s)_{s\geq 0}$ be the stationary Ornstein--Uhlenbeck process \eqref{eq:OU} with parameters $\alpha:=1$ and $g_0:=1$.
As a consequence of the variation-of-constants formula \cite[Section 3.3]{Mao:2007},
it admits the moving-average representation
\begin{equation*}
    \widetilde Y_t = \int_{-\infty}^t e^{A(t-s)}\ee_K\,\d B_s,
\end{equation*}
where $B=(B_s)_{s\in\R}$ is the two-sided Brownian motion.
Formula \eqref{eq:exptA} with $\alpha=1$ and $j=1$ gives
\begin{equation*}
    \ee_1^T e^{A t}\ee_K  =e^{-t}\ee_1^T e^{Ut}\ee_K
    =\frac{e^{-t}t^{K-1}}{(K-1)!}    =F_K'(t),
\end{equation*}
where $F_K=F_K(t)$ is given by \eqref{def:FK}.
Consequently,
\begin{equation*}
    \widetilde Y^1_t = \int_{-\infty}^t F_K'(t-s)\,\d B_s.
\end{equation*}
The stationary covariance function of $\widetilde Y_t$ is therefore
\begin{equation}  \label{eq:RK}
    R_K(t):=\operatorname{Cov}_{\widetilde M_0}(\widetilde Y_t^1, \widetilde Y_0^1)
      =\int_0^\infty F_K'(v)F_K'(v+t)\,\d v  \qquad \mbox{for } t\geq 0.
\end{equation}
By stationarity, it follows that
\(
    \widetilde V(t) = \operatorname{Var}_{\widetilde M_0}  \left(  \int_0^{t}\widetilde Y_\tau^1\, \d\tau \right)
     = \int_0^t \int_0^t \operatorname{Cov}_{\widetilde M_0}(\widetilde Y_s^1, \widetilde Y_r^1) \,\d r\d s  \nonumber\\
     = \int_0^t \int_0^t  R_K(|s-r|) \,\d r\d s   =  2\int_0^t(t-s)R_K(s)\,\d s.      \label{eq:wV}
\)
%Moreover,
%\[   \widetilde V'(t)=2\int_0^tR_K(s)\,\d s. \]
%In particular, $\widetilde V'(t)>0$ for every $t>0$.
Since $R_K(s)$ decays exponentially as $s\to\infty$, it follows that
\[
    \widetilde V(t)  =  2t\int_0^\infty R_K(s)\,\d s + \mathcal O(1)    \qquad \mbox{as } t\to\infty.
\]
Recalling \eqref{eq:RK}, we have
\(
   2 \int_0^\infty R_K(s)\,\d s &=& 2 \int_0^\infty \int_0^\infty F_K'(v)F_K'(v+s)\,\d v\,\d s  \nonumber \\
     &=& \int_0^\infty\int_0^\infty  F_K'(v) F_K'(u) \,\d v\,\d u =1,
    %F_K'(v)=\frac{e^{-v}v^{K-1}}{(K-1)!}.
    \label{eq:intRK}
\)
where we used the fact that $F_K'$ is a probability density on $(0,\infty)$.
This proves \eqref{eq:wVas}.

Consequently, the integral in \eqref{eq:InK-alpha-scaled} is finite if and only if $\Re\lambda > -\gamma_n \alpha^{-2K}$,
and we have
\[
   \lim_{\lambda\to -\gamma_n\alpha^{-2K}} I_n(\alpha;\lambda) = +\infty.
\]
Also, obviously, $I_n(\alpha;\lambda)$ is a decreasing function of $\lambda$ on its domain.
Finally, the claim $\lim_{\lambda\to+\infty} I_n(\alpha;\lambda) =  0$ follows immediately by the
dominated convergence theorem.
\end{proof}

\begin{corollary}\label{corr:1}
Let $n\neq 0$ and $\widehat W_n >0$. 
Then for every $\alpha>0$ the dispersion relation \eqref{eq:general-K-dispersion-alpha}
has a unique real solution $\lambda_\alpha > -\gamma_n \alpha^{-2K}$. 
Moreover, $\lambda_\alpha\in\R$ is the rightmost root, %of \eqref{eq:general-K-dispersion-alpha},
i.e., all other solutions $\lambda\in\C$ verify $\Re \lambda \leq \lambda_\alpha$.
\end{corollary}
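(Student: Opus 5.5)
Since $g_0>0$, $g_1<0$, $\rho_0>0$ and $\widehat W_n>0$, the prefactor $c_n:=-\rho_0 g_0 g_1 \widehat W_n |q_n|^2$ is strictly positive. The dispersion relation \eqref{eq:general-K-dispersion-alpha} therefore reads $I_n(\alpha;\lambda)=1/c_n$ with $1/c_n\in(0,\infty)$. I will treat existence and uniqueness of the real root first, and then the rightmost property.

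\emph{Existence and uniqueness of the real root.} On the real interval $(-\gamma_n\alpha^{-2K},+\infty)$ the function $\lambda\mapsto I_n(\alpha;\lambda)$ is finite by Lemma~\ref{lem:I_lambda}. It is continuous there by dominated convergence, since the integrand is dominated locally uniformly in $\lambda$ by an integrable function. It is strictly decreasing, because the integrand $e^{-\lambda\tau/\alpha}F_K(\tau)^2\exp(-\gamma_n\alpha^{-(2K+1)}\widetilde V(\tau))$ is strictly positive for $\tau>0$ and strictly decreasing in $\lambda$. Its limits at the two endpoints are $+\infty$ and $0$, again by Lemma~\ref{lem:I_lambda}. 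The intermediate value theorem then gives exactly one $\lambda_\alpha>-\gamma_n\alpha^{-2K}$ with $I_n(\alpha;\lambda_\alpha)=1/c_n$. Strictness of the monotonicity is the only point that needs a line of justification beyond what the lemma states.

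\emph{Rightmost property.} Let $\lambda\in\C$ be any root in the domain $\Re\lambda>-\gamma_n\alpha^{-2K}$. The integrand weight $w(\tau):=F_K(\tau)^2\exp(-\gamma_n\alpha^{-(2K+1)}\widetilde V(\tau))\ge0$ is real and nonnegative. Hence
\[
  \frac1{c_n}=|I_n(\alpha;\lambda)|\le \alpha^{-(2K+1)}\int_0^\infty e^{-\Re\lambda\,\tau/\alpha}w(\tau)\,\d\tau = I_n(\alpha;\Re\lambda).
\]
Thus $I_n(\alpha;\Re\lambda)\ge 1/c_n=I_n(\alpha;\lambda_\alpha)$. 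Since $I_n(\alpha;\cdot)$ is strictly decreasing on the real axis, this forces $\Re\lambda\le\lambda_\alpha$. Roots outside the domain, with $\Re\lambda\le-\gamma_n\alpha^{-2K}$, trivially satisfy $\Re\lambda<\lambda_\alpha$, because $\lambda_\alpha$ lies strictly to the right of $-\gamma_n\alpha^{-2K}$.

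\emph{Sharpening and main obstacle.} I expect no real obstacle. One can additionally observe that equality $\Re\lambda=\lambda_\alpha$ requires equality in the triangle inequality. That means $e^{-i\Im\lambda\,\tau/\alpha}$ must be constant on the support of $w$, which is $(0,\infty)$, so $\Im\lambda=0$ and $\lambda_\alpha$ is the only root on its vertical line. This is stronger than the claim. The one subtle point is interpretive: the dispersion relation is only meaningful where $I_n$ converges, so "all other solutions" should be understood within the domain of convergence. That is how I handle it above.
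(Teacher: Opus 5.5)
Your proof is correct and follows the same route as the paper. Unique real solvability comes from the monotonicity and endpoint limits in Lemma~\ref{lem:I_lambda}, where you also make explicit the continuity and strict decrease that the paper leaves implicit. The rightmost property comes from $|I_n(\alpha;\lambda)|\le I_n(\alpha;\Re\lambda)$ combined with monotonicity on the real axis, and your further remark that equality forces $\operatorname{Im}\lambda=0$ is valid and slightly sharpens the stated conclusion.
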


\begin{proof}
The unique real solvability of \eqref{eq:general-K-dispersion-alpha} follows directly from Lemma \ref{lem:I_lambda},
recalling that $g_0>0$, $g_1 <0$.
Moreover, if $\lambda\in\C$ is any other solution, then
\[
   I_n(\alpha;\lambda_\alpha) = | I_n(\alpha;\lambda) | \leq I_n(\alpha;\Re\lambda),
\]
and since $s \mapsto I_n(\alpha;s)$ is decreasing on the real axis, we have $\lambda_\alpha \geq \Re\lambda$.
%Moreover, if $\operatorname{Im}\lambda\neq0$, the inequality is strict, because the phase factor $e^{-t i\operatorname{Im}\lambda}$ is not constant on the support of the integrand.
\end{proof}

Corollary \ref{corr:1} implies that, for \(n\neq0\) such that $\widehat W_n>0$ for any $\alpha>0$,
the spatially inhomogeneous Fourier mode $n\neq 0$ is linearly unstable
if and only if the unique real root of \eqref{eq:general-K-dispersion-alpha} is positive.
%\cJH{eventually discuss all three alternatives $\lambda_\alpha$ positive, zero, negative}
The neutral threshold for overall linear instability of the constant steady state of \eqref{eq:FP}
is obtained by setting $\lambda=0$, i.e., looking for $\alpha>0$ such that
\(   \label{eq:dispI0n}
      -\rho_0 g_0 g_1\widehat W_n |q_n|^2 I_n(\alpha;0) = 1.
\)
We therefore study the properties of the function
\(   \label{def:I0alpha}
   I^0_n(\alpha) := \frac{1}{\alpha^{2K+1}} \int_0^\infty   F_K(\tau)^2 \exp\left(-\frac{\gamma_n}{\alpha^{2K+1}} \widetilde V(\tau) \right) \,\d\tau,
\)
where we recall that $\gamma_n = \frac{|q_n|^2 g_0^2}{2}$ and $F_K=F_K(\tau)$ is given by \eqref{def:FK}.

\begin{lemma}\label{lem:I0monotone}
Let $q_n\neq0$ and $g_0>0$. Then the function $I_n^0=I_n^0(\alpha)$ given by \eqref{def:I0alpha} is strictly increasing in $\alpha\in (0,+\infty)$.
%Moreover,
%\[ \lim_{\alpha\to0+} I_n^0(\alpha) (\alpha) = 0, \qquad \lim_{\alpha\to+\infty} I_n^0(\alpha) (\alpha) = \frac{2}{|q_n|^2 g_0^2}. \]
\end{lemma}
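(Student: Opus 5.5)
The plan is to absorb the prefactor $\alpha^{-(2K+1)}$ by a change of variables and reduce the claim to a monotonicity property of a single auxiliary function of $\tau$. Set $\beta:=\alpha^{-(2K+1)}$. Since $\alpha\mapsto\beta$ is strictly decreasing, it suffices to show that
\[
   \phi(\beta):=\beta\int_0^\infty F_K(\tau)^2\exp\left(-\gamma_n\beta\,\widetilde V(\tau)\right)\d\tau
\]
is strictly decreasing in $\beta\in(0,\infty)$. As a consistency check, the asymptotics \eqref{eq:wVas} give $\phi(\beta)\to 1/\gamma_n$ as $\beta\to0^+$. Near $\tau=0$ we have $F_K(\tau)^2\sim c\,\tau^{2K}$, so $\phi(\beta)\to0$ as $\beta\to\infty$. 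Both limits are compatible with monotone decrease.

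First I would check that $\widetilde V$ is a strictly increasing $C^1$ bijection of $[0,\infty)$ onto itself. By \eqref{eq:wV} we have $\widetilde V'(\tau)=2\int_0^\tau R_K(s)\,\d s$. From \eqref{eq:RK}, $R_K>0$ because $F_K'>0$ on $(0,\infty)$, hence $\widetilde V'>0$ for $\tau>0$. Together with \eqref{eq:wVas} this shows that $\widetilde V$ is onto $[0,\infty)$. I then substitute $u=\widetilde V(\tau)$ and write $s:=\gamma_n\beta$ and $h(u):=F_K(\tau)^2/\widetilde V'(\tau)$ evaluated at $\tau=\widetilde V^{-1}(u)$. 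This turns the integral into a Laplace transform:
\[
   \phi(\beta)=\frac{1}{\gamma_n}\,s\int_0^\infty h(u)e^{-su}\,\d u .
\]
Differentiating in $s$ and integrating by parts with $\d(ue^{-su})=(1-su)e^{-su}\,\d u$ gives
\[
   \frac{\d}{\d s}\left(s\int_0^\infty h(u)e^{-su}\,\d u\right)=\int_0^\infty h(u)(1-su)e^{-su}\,\d u=-\int_0^\infty u\,e^{-su}\,h'(u)\,\d u .
\]
For this step I need the boundary terms $h(u)ue^{-su}$ to vanish at both ends. At $u=0$ they vanish because $F_K(\tau)^2=O(\tau^{2K})$ while $\widetilde V'(\tau)\sim c\,\tau^{2K-1}$ by the small-$\tau$ behaviour of $R_K$. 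At $u=\infty$ they vanish because $h$ stays bounded, since $F_K\to1$ and $\widetilde V'\to1$ by \eqref{eq:intRK}. Hence $\phi$ is strictly decreasing provided $h$ is nondecreasing and not constant, and that is automatic here because $h(0)=0$ while $h(\infty)=1$.

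The main obstacle is therefore to prove that $\tau\mapsto F_K(\tau)^2/\widetilde V'(\tau)$ is nondecreasing. Since $\widetilde V''=2R_K$, this is equivalent to
\[
   F_K'(\tau)\,\widetilde V'(\tau)\ \ge\ F_K(\tau)\,R_K(\tau)\qquad(\tau>0).
\]
Writing out $\widetilde V'(\tau)=2\int_0^\tau\int_0^\infty F_K'(v)F_K'(v+s)\,\d v\,\d s$ and $R_K(\tau)=\int_0^\infty F_K'(v)F_K'(v+\tau)\,\d v$, I would first try to compare the two sides pointwise in $v$, using the explicit Gamma density $F_K'(t)=e^{-t}t^{K-1}/(K-1)!$. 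Its log-concavity should give $F_K'(v+s)F_K'(\tau)\ge F_K'(v+\tau)F_K'(s)$ for $s\le\tau$. Integrated against $F_K'(v)\,\d v$ and $\d s$ over $[0,\tau]$, this yields $F_K'(\tau)\widetilde V'(\tau)\ge 2R_K(\tau)\int_0^\tau F_K'(s)\,\d s=2F_K(\tau)R_K(\tau)$, which is even stronger than what is needed.

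The log-concavity inequality itself is the statement that $F_K'(v+s)/F_K'(s)$ is nonincreasing in $s$, which holds because $\log F_K'$ is concave. If this pointwise comparison failed, the fallback would be to verify the inequality directly for the explicit polynomial-exponential expressions of $R_K$.
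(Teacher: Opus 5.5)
Your proof is correct and follows essentially the paper's route: the paper obtains the same key inequality $F_K'\widetilde V'\ge 2F_KR_K$ from the monotonicity of $F_K'(v+\tau)/F_K'(\tau)=e^{-v}(1+v/\tau)^{K-1}$, which is your log-concavity argument, and it then substitutes $u=\alpha^{-(2K+1)}\widetilde V(\tau)$ so that $F_K(\tau)^2/\widetilde V'(\tau)$ at $\tau=\widetilde V^{-1}(\alpha^{2K+1}u)$ is pointwise increasing in $\alpha$. This is your Laplace-transform step after rescaling $w=su$, and it avoids the differentiation and boundary terms; one harmless slip is that $R_K(0)>0$ gives $\widetilde V'(\tau)\sim 2R_K(0)\tau$ rather than $c\,\tau^{2K-1}$, but you only need $h$ bounded near $u=0$.
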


\begin{proof}
We show that the function
\begin{equation*}
    \tau  \mapsto \frac{F_K(\tau)^2}{\widetilde V'(\tau)}
\end{equation*}
is strictly increasing for all $\tau>0$.
With \eqref{eq:RK} we have
\begin{align*}
    \frac{R_K(\tau)}{F_K'(\tau)}
    &=\int_0^\infty F_K'(v)\frac{F_K'(v+\tau)}{F_K'(\tau)}\,\d v \notag\\
    &=\int_0^\infty F_K'(v)e^{-v}
    \left(1+\frac{v}{\tau}\right)^{K-1}\,\d v.
\end{align*}
For every fixed $v>0$, the function $\tau \mapsto \left(1+\frac{v}{\tau}\right)^{K-1}$ is nonincreasing on $(0,\infty)$.
Therefore, since $F_K'(\tau) = \frac{e^{-\tau}\tau^{K-1}}{(K-1)!} >0$ for $\tau>0$,
the fraction $R_K/F_K'$ is nonincreasing on $(0,\infty)$, and thus, for any $0<s\leq \tau$,
\begin{equation*}
    \frac{R_K(s)}{F_K'(s)}  \geq  \frac{R_K(\tau)}{F_K'(\tau)},
\end{equation*}
Multiplying by $F_K'(s)$ and integrating over $(0,\tau)$ gives
\begin{equation*}
   \frac{\widetilde V'(\tau)}{2} = \int_0^\tau R_K(s)\,\d s  \geq \frac{R_K(\tau)}{F_K'(\tau)}  \int_0^\tau F_K'(s)\,\d s
      = \frac{R_K(\tau)}{F_K'(\tau)}F_K(\tau),
\end{equation*}
where we took the derivative in \eqref{eq:wV} to obtain the first equality.
Therefore,
\begin{equation}  \label{eq:gS-FR}
    F_K'(\tau) \frac{\widetilde V'(\tau)}{2}  \geq F_K(\tau)R_K(\tau).
\end{equation}
Using $\widetilde V'' = 2R_K$, we calculate
\[
    \frac{\d}{\d \tau}\left(\frac{F_K(\tau)^2}{\widetilde V'(\tau)}\right)   =  \frac{2 F_K(\tau)\bigl(F_K'(\tau) \widetilde V'(\tau) - F_K(\tau)R_K(\tau)\bigr)} {\widetilde V'(\tau)^2}.
\]
By \eqref{eq:gS-FR},
\[
    F_K'(\tau) \widetilde V'(\tau)-F_K(\tau)R_K(\tau)  \geq F_K(\tau)R_K(\tau)>0.
\]
Consequently,
\begin{equation}    \label{eq:FK-V-monotone}
    \frac{\d}{\d \tau}\left(\frac{F_K(\tau)^2}{\widetilde V'(\tau)}\right)>0
    \qquad\text{for every } \tau>0.
\end{equation}
Recalling that $\widetilde V'(\tau) =2\int_0^\tau R_K(s)\,\d s > 0$,
we perform the increasing change of variable $u := \alpha^{-(2K+1)} \widetilde V(\tau)$ in \eqref{def:I0alpha}.
Due to \eqref{eq:wVas}, $\widetilde V$ maps $(0,\infty)$ to itself, consequently,
\(
     I_n^0(\alpha)  &=&  \frac{1}{\alpha^{2K+1}}  \int_0^\infty F_K(\tau)^2 \exp\left(-\frac{\gamma_n}{\alpha^{2K+1}} \widetilde V(\tau) \right) \,\d\tau \nonumber \\
      &=& \int_0^\infty \frac{F_K(\tau_\alpha(u))^2}{\widetilde V'(\tau_\alpha(u))} e^{ -\gamma_n u} \,\d u,
        \label{eq:I0}
\)
where we denoted $\tau_\alpha(u) := \widetilde V^{-1} \left( \alpha^{2K+1} u \right)$.
Then, since $\widetilde V^{-1}$ is strictly increasing, \eqref{eq:FK-V-monotone} implies that
\[
   \alpha \mapsto \frac{F_K(\tau_\alpha(u))^2}{\widetilde V'(\tau_\alpha(u))}
\]
is strictly increasing for all $u>0$. We conclude that $I_n^0(\alpha)$ is strictly increasing in $\alpha\in (0,\infty)$.
\end{proof}

\begin{lemma}\label{lem:I0limits}
Let $q_n\neq0$ and $g_0>0$. Then the function $I_n^0(\alpha)=I_n^0(\alpha)$ given by \eqref{def:I0alpha} satisfies
\begin{equation}
    \lim_{\alpha\to0^+} I_n^0(\alpha) =0,  \qquad  \lim_{\alpha\to+\infty}I_n^0(\alpha) = \gamma_n^{-1}.  %\frac{2}{|q_n|^2g_0^2}.
\end{equation}
\end{lemma}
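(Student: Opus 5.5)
We will work with the representation \eqref{eq:I0} obtained in the proof of Lemma~\ref{lem:I0monotone},
\[
   I_n^0(\alpha) = \int_0^\infty \frac{F_K(\tau_\alpha(u))^2}{\widetilde V'(\tau_\alpha(u))}\, e^{-\gamma_n u}\,\d u,
   \qquad \tau_\alpha(u) = \widetilde V^{-1}\bigl(\alpha^{2K+1}u\bigr).
\]
For fixed $u>0$ we have $\tau_\alpha(u)\to 0^+$ as $\alpha\to0^+$ and $\tau_\alpha(u)\to+\infty$ as $\alpha\to+\infty$. This holds because $\widetilde V$ is a strictly increasing bijection of $(0,\infty)$ onto itself with $\widetilde V(0)=0$. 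Both limits then reduce to identifying the limits of the ratio $\Phi(\tau):=F_K(\tau)^2/\widetilde V'(\tau)$ as $\tau\to0^+$ and $\tau\to\infty$, and to passing to the limit under the integral.

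First we treat the limit $\alpha\to+\infty$. From \eqref{eq:wV}, $\widetilde V'(\tau)=2\int_0^\tau R_K(s)\,\d s\to 2\int_0^\infty R_K=1$ by \eqref{eq:intRK}, and $F_K(\tau)\to1$. Hence $\Phi(\tau)\to1$ as $\tau\to\infty$.

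Next we treat $\alpha\to0^+$. As $\tau\to0^+$ we have $F_K(\tau)\sim \tau^K/K!$. Meanwhile $R_K(0)=\int_0^\infty F_K'(v)^2\,\d v>0$ and $R_K$ is continuous, so $\widetilde V'(\tau)\sim 2R_K(0)\tau$. Thus $\Phi(\tau)=\mathcal O(\tau^{2K-1})\to0$, using $K\ge1$.

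For the domination we use the monotonicity established in \eqref{eq:FK-V-monotone}. The function $\Phi$ is strictly increasing on $(0,\infty)$, so $0<\Phi(\tau)\le \lim_{\tau\to\infty}\Phi(\tau)=1$ for all $\tau>0$. Therefore the integrand in \eqref{eq:I0} is bounded by $e^{-\gamma_n u}$, which is integrable since $\gamma_n>0$. Dominated convergence then gives
\[
   \lim_{\alpha\to0^+}I_n^0(\alpha)=0,
   \qquad
   \lim_{\alpha\to+\infty}I_n^0(\alpha)=\int_0^\infty e^{-\gamma_n u}\,\d u=\gamma_n^{-1}.
\]

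The main technical point is justifying the small-$\tau$ asymptotics, in particular the continuity and positivity of $R_K$ at $0$. These follow from \eqref{eq:RK}, since $F_K'$ is bounded and integrable. Even without the precise asymptotics, $\Phi(\tau)\to0$ follows because $F_K(\tau)^2\le \tau^{2K}$ while $\widetilde V'(\tau)\ge c\tau$ near $0$.

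An alternative route for the limit $\alpha\to0^+$ works directly in \eqref{def:I0alpha}. One substitutes $\tau=\alpha^{2K+1}u$ on the region where $\widetilde V(\tau)\approx R_K(0)\tau^2$. That route is messier, so we prefer the $u$-representation, where monotonicity of $\Phi$ supplies the uniform bound for free.
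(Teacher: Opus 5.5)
Your proof is correct and follows essentially the same route as the paper: the $u$-representation \eqref{eq:I0}, the limits $F_K^2/\widetilde V'\to 1$ as $\tau\to\infty$ and $F_K^2/\widetilde V' = \mathcal O(\tau^{2K-1})$ as $\tau\to0^+$, and dominated convergence. The only difference is the dominating function: you use the uniform bound $\Phi\le 1$, which follows from \eqref{eq:FK-V-monotone} and $\Phi(\tau)\to 1$, and this gives the single dominant $e^{-\gamma_n u}$ for both limits; the paper instead dominates by the integrand at $\alpha=1$ for $\alpha\to0^+$ and leaves the domination for $\alpha\to+\infty$ implicit, so your version is slightly cleaner.
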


\begin{proof}
Note that, as $\tau\to0^+$,
\begin{equation*}
    F_K(\tau) = \frac{\tau^K}{K!} + \mathcal O(\tau^{K+1}),
    \qquad
%Moreover, since $R_K$ is continuous at zero,
    \widetilde V'(\tau)
    =2\int_0^\tau R_K(s)\,\d s
    =2R_K(0)\tau + o(\tau).
\end{equation*}
Therefore, noting that $R_K(0) = \operatorname{Var}_{M_0} (\widetilde Y_0^1) > 0$, we have
\[
    \frac{2F_K(\tau_\alpha)^2}{\widetilde V'(\tau_\alpha)} = \mathcal O \left( \tau_\alpha^{2K-1} \right)
      =  \mathcal O \left( \left(\alpha^{2K+1} u\right)^\frac{2K-1}{2} \right),
\]
where we used
$$\tau_\alpha = \tau_\alpha(u) = \widetilde V^{-1} \left( \alpha^{2K+1} u\right),$$
and $\widetilde V^{-1}(t) = \mathcal O(t^{1/2})$ as $t\to 0^+$.
Then, the claim $\lim_{\alpha\to0^+} I_n^0(\alpha) =0$ follows from \eqref{eq:I0} by the dominated convergence theorem,
where the integrable dominant can be chosen as the integrand evaluated at $\alpha=1$.

To evaluate the limit as $\alpha\to+\infty$, we note that $\lim_{t\to+\infty}F_K(t)=1$, and, by \eqref{eq:intRK},
\[
    \lim_{t\to+\infty}\widetilde V'(t) = 2\int_0^\infty R_K(s)\,\d s = 1.
\]
Consequently,
\[
    \lim_{\tau\to+\infty} \frac{F_K(\tau)^2}{\widetilde V'(\tau)} = 1.
\]
Noting that $\widetilde V^{-1}(t) = t +\mathcal O(1)$ as $t\to+\infty$, %see "Explanation about $\widetilde V^{-1}(t)$"
we have $\tau_\alpha(u) = \mathcal O(\alpha^{2K+1} u)$,
and using \eqref{eq:I0}, we conclude
\[
   \lim_{\alpha\to+\infty} I_n^0(\alpha) = \int_0^\infty e^{-\gamma_n u} \,\d u = \gamma_n^{-1}.
\]
\end{proof}

%We observe that the limits of $I_n^0(\alpha)$ as $\alpha\to 0^+$ and $\alpha\to+\infty$ are independent of $K$.
Combining the results of Lemmas \ref{lem:I0monotone} and \ref{lem:I0limits}, we observe that if
\(  \label{eq:alphaCrit}
   -2\rho_0 \frac{g_1}{g_0} \widehat W_n > 1,
\)
then for each $K\in\N$ there exists a unique critical value $\bar\alpha=\bar\alpha(K,n)> 0$ such that the dispersion relation
\eqref{eq:general-K-dispersion-alpha} with $\alpha = \bar\alpha$ has the root $\lambda=0$.
This critical value is found as the solution of \eqref{eq:dispI0n}, and its uniqueness
follows from the monotonicity established by Lemma \ref{lem:I0monotone}.
If $\alpha<\bar\alpha$, then all roots $\lambda\in\C$ of the dispersion relation \eqref{eq:general-K-dispersion-alpha}
have negative real parts and, consequently, the Fourier mode $n\in\N$ is linearly asymptotically stable.
On the other hand, if $\alpha>\bar\alpha$, then there exists a root with positive real part
and the Fourier mode is linearly unstable. %For $\alpha=\bar\alpha$ the mode is neutral.
Let us note that the condition \eqref{eq:alphaCrit} is independent of $K\in\N$ and
is identical to the instability condition \eqref{eq:stability} for the system without memory (we remind that $g_1<0$).
Therefore, short memory $\alpha>\bar\alpha$ leaves the $n$-th Fourier mode unstable,
while long memory $\alpha<\bar\alpha$ stabilizes it.

In the next lemma we show that if any Fourier mode in the model with memory is unstable,
then the given homogeneous steady state cannot be stable in the memoryless model
because \eqref{eq:alphaCrit} holds at least for $|n|=1$.

\begin{lemma}\label{lem:stability}
Let $W=W(x)$ be radially symmetric and nonincreasing with respect to $|x|$.
If for some $K\in\N$, $\alpha>0$ and a Fourier mode $n\neq 0$ the dispersion relation \eqref{eq:general-K-dispersion-alpha}
has a root $\lambda\in\C$ with $\Re\lambda \geq 0$, then
\(  \label{eq:instab}
   -2\rho_0 \frac{g_1}{g_0} \widehat W_\mathbf{1} > 1,
\)
where $\mathbf{1}$ is any unit vector in $\Z^d$.
\end{lemma}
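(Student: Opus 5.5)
The plan is to turn the existence of a root with $\Re\lambda\ge 0$ into a lower bound on $|\widehat W_n|$, using the monotonicity results already proved. Then a separate inequality for radially nonincreasing kernels, $|\widehat W_n|\le \widehat W_{\mathbf 1}$, transfers that bound to the first mode.

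\emph{Step 1 (modulus estimate).} Let $\lambda$ be a root of \eqref{eq:general-K-dispersion-alpha} with $\Re\lambda\ge0$. Then $\Re\lambda>-\gamma_n\alpha^{-2K}$, so $I_n(\alpha;\lambda)$ is finite by Lemma~\ref{lem:I_lambda}. Taking moduli and using that the integrand in $I_n$ is $e^{-\lambda t}$ times a positive function gives $|I_n(\alpha;\lambda)|\le I_n(\alpha;\Re\lambda)$. Since $I_n(\alpha;\cdot)$ is decreasing on the real axis (Lemma~\ref{lem:I_lambda}), we get
\[
1\le \rho_0 g_0|g_1|\,|\widehat W_n|\,|q_n|^2 I_n(\alpha;\Re\lambda)\le \rho_0 g_0|g_1|\,|\widehat W_n|\,|q_n|^2 I^0_n(\alpha).
\]
By Lemma~\ref{lem:I0monotone}, $I_n^0$ is strictly increasing, and by Lemma~\ref{lem:I0limits} its supremum is $\gamma_n^{-1}=2/(|q_n|^2g_0^2)$. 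Hence $I_n^0(\alpha)<\gamma_n^{-1}$, and therefore
\[
1<-2\rho_0\frac{g_1}{g_0}|\widehat W_n|.
\]
Note that this holds whatever the sign of $\widehat W_n$ is, which is exactly what is needed to cover the modes with negative coefficients.

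\emph{Step 2 (first mode dominates).} It remains to show $|\widehat W_n|\le \widehat W_{\mathbf 1}$ for all $n\ne0$. In $d=1$ I would use the layer-cake representation. Since $W$ is even, nonincreasing in $|x|$ and supported in $(-1/2,1/2)$, write $W=\int_{(0,1/2)}\mathbf 1_{[-r,r]}\,\d\nu(r)$ with a nonnegative measure $\nu$. This gives
\[
\widehat W_n=\int \frac{\sin(2\pi n r)}{\pi n}\,\d\nu(r).
\]
The elementary inequality $|\sin(k\theta)|\le k|\sin\theta|$ for integer $k\ge1$, applied with $\theta=2\pi r\in(0,\pi)$ where $\sin\theta\ge0$, yields pointwise $|\sin(2\pi n r)|/(\pi |n|)\le \sin(2\pi r)/\pi$. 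Integrating against $\nu$ gives $|\widehat W_n|\le\widehat W_1$, and in particular $\widehat W_1\ge0$. Combining with Step 1 yields \eqref{eq:instab}.

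\emph{Step 3 (general $d$, the main obstacle).} Here I would again use the layer-cake decomposition into normalized balls $B_r$ with $r<1/2$, so it suffices to treat $W=\mathbf 1_{B_r}$. I would then slice $B_r$ by lines parallel to the direction of $q_n$, each chord of half-length $a(z)\le r$. This reduces $\widehat W_n$ to an integral of $2\sin(|q_n|a)/|q_n|$ over the transversal variable $z$. For $\widehat W_{\mathbf 1}$ I would slice in the direction $\mathbf 1$, where by rotation invariance of the ball the same chord-length distribution appears, with $2\sin(2\pi a)/(2\pi)$ in place of $2\sin(|q_n|a)/|q_n|$.

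The difficulty is that $|n|$ need not be an integer, and the inequality $|\sin(k\theta)|\le k\sin\theta$ can fail for noninteger $k$ when $\theta$ is near $\pi$. I would first try to use that, after integrating over chords, each ball coefficient is an explicit Bessel-type function $r\mapsto c_d\,J_{d/2}(|q|r)/(|q|r)^{d/2}$ times $r^d$. Then I would compare these directly for $|q|\ge2\pi$ and $r<1/2$. If that fails, I would instead integrate in $x_1$ after fixing the remaining coordinates: $W(\cdot,x')$ is even and nonincreasing in $|x_1|$, so the one-dimensional Step 2 applies slice by slice. This handles $n$ along a coordinate axis, and the remaining $n$ are the step I expect to require the real work.
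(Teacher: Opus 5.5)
Your Steps 1 and 2 match the paper's argument. These are the chain $|I_n(\alpha;\lambda)|\le I_n(\alpha;\Re\lambda)\le I_n^0(\alpha)<\gamma_n^{-1}$, which gives $1<-2\rho_0\frac{g_1}{g_0}|\widehat W_n|$, and the one-dimensional layer-cake bound $|\widehat h_k|\le \widehat h_1$ via $|\sin(k\theta)|\le |k|\sin\theta$.

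The gap is Step~3. For $d\ge 2$ you leave open the modes $n$ that are not parallel to a coordinate axis, e.g.\ $n=(1,1)$. The Bessel-function comparison you sketch is an untested plan and would need quantitative control of $x^{-d/2}J_{d/2}(x)$ beyond its first zero. As written, the key inequality $|\widehat W_n|\le \widehat W_{\mathbf 1}$ is therefore not established in the $d$-dimensional setting of the lemma.

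The missing observation is that your own slicing idea already handles every $n\neq 0$, because you never need to slice along the direction of $q_n$. Choose a coordinate with $n_j\neq 0$, say $j=1$, and write $x=(x_1,\tilde x)$, $n=(n_1,\tilde n)$. Then apply the triangle inequality to the outer integral, which simply discards the transverse phase $e^{-2\pi i\tilde n\cdot\tilde x}$:
\begin{align*}
|\widehat W_n| &= \left| \int e^{-2\pi i \tilde n\cdot\tilde x} \int_{-1/2}^{1/2} W(x_1,\tilde x)\, e^{-2\pi i n_1 x_1}\,\d x_1\,\d\tilde x \right| \\
&\le \int \left| \int_{-1/2}^{1/2} W(x_1,\tilde x)\, e^{-2\pi i n_1 x_1}\,\d x_1 \right| \d\tilde x \le \int \int_{-1/2}^{1/2} W(x_1,\tilde x)\cos(2\pi x_1)\,\d x_1\,\d\tilde x = \widehat W_{\mathbf 1}.
\end{align*}
Each step is justified as follows:
\begin{itemize}
\item The second inequality is your Step~2 applied slice by slice. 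Each slice $W(\cdot,\tilde x)$ is even and nonincreasing in $|x_1|$, and the frequency $k=n_1$ is an \emph{integer}. If a slice does not vanish at $x_1=\pm 1/2$, add the constant $W(1/2,\tilde x)$ to the layer-cake representation, as the paper does; it does not affect nonzero Fourier coefficients.
\item The final equality, together with radial symmetry, identifies the result with $\widehat W_{\mathbf 1}$ for any unit vector.
\end{itemize}
The difficulty with non-integer $|n|$ therefore never arises. This is exactly how the paper proves the $d$-dimensional bound, and with it your Steps 1--2 complete the proof.
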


\begin{proof}
We first establish the following fact: If $h: [-1/2,1/2] \to[0,\infty)$ is even and nonincreasing on $[0,1/2]$, then
\(  \label{eq:fact}
   \left|\widehat h_k\right|\leq \widehat h_1 \qquad\text{for every } k\in\mathbb Z\setminus\{0\}.
\)
Indeed, %up to an additive constant that does not affect the nonzero Fourier coefficients,
$h=h(s)$ admits the layer-cake representation
\[
   h(s)= h(1/2) + \int_{(0,1/2]} \chi_{\{|s|<r\}} \,\d\mu(r)
\]
for some nonnegative Lebesgue--Stieltjes measure $\mu$.
The $k$-th Fourier coefficient of the characteristic function $\chi_{\{|s|<r\}}$ as a function of $s\in [-1/2,1/2]$ is
${\sin(2\pi kr)}/{\pi k}$. Consequently,
\[
  \left| \widehat h_k \right| =
  \left| \int_{(0,1/2]} \frac{\sin(2\pi kr)}{\pi k}\,\mathrm d\mu(r) \right| \leq
  \int_{(0,1/2]} \frac{\sin(2\pi r)}{\pi}\,\mathrm d\mu(r) = \widehat h_1,
\]
where we used the inequality $|\sin(k\theta)|\leq |k|\sin\theta$ for $\theta\in [0,\pi]$,
established in Lemma \ref{lem:technical} of the Appendix.

We then prove that $|\widehat W_n|\leq \widehat W_\mathbf{1}$ for every $n\neq 0$.
We first note that due to the radial symmetry of $W$, the Fourier coefficient $\widehat W_\mathbf{1}$
does not depend on the particular choice of the unit vector $\mathbf{1}\in \Z^d$.
Consequently, without loss of generality, we assume that $n_1 \neq 0$,
and choose $\mathbf{1} := (1,0,\dots,0)\in\Z^d$.
For each fixed $\tilde x\in [-1/2, 1/2]^{d-1}$ define
\[
   w(x_1; \tilde x) := W(x_1,\tilde x) \qquad\mbox{for } x_1\in [-1/2, 1/2].
\]
We have
\begin{align*}
   |\widehat W_n| &= \left| \int_{\tilde x}  \int_{-1/2}^{1/2} W(x_1,\tilde x) e^{-2\pi i n_1 x_1} e^{-2\pi i \tilde n\cdot\tilde x} \,\d x_1 \d \tilde x \right| \\
    &\leq \int_{\tilde x} \left| \int_{-1/2}^{1/2} W(x_1,\tilde x) e^{-2\pi i n_1 x_1}\,\d x_1 \right|\d \tilde x \\
   &= \int_{\tilde x} \left| \widehat w_{n_1} (\,\cdot\; ; \tilde x) \right| \d \tilde x.
\end{align*}
Using \eqref{eq:fact} with $h:=w(\,\cdot\,; \tilde x)$ and $k:=n_1\neq 0$,
noting that $h$ is even and nonincreasing, we have
$\left|\widehat  w_{n_1} (\,\cdot\; ; \tilde x) \right| \leq \widehat  w_{1} (\,\cdot\, ; \tilde x)$
for all $\tilde x\in [-1/2, 1/2]^{d-1}$.
Consequently,
\[
   |\widehat W_n| \leq \int_{\tilde x} \widehat w_{1} (\,\cdot\, ; \tilde x)  \d \tilde x
   =   \int_{\tilde x} \int_{-1/2}^{1/2}  W(x_1,\tilde x) \cos(2\pi x_1) \, \d x_1 \d \tilde x = \widehat W_\mathbf{1}.
\]

Now, if the dispersion relation \eqref{eq:general-K-dispersion-alpha} has a root $\lambda\in\C$ with $\Re\lambda >0$ for some $n\neq 0$,
then the inequality
\[
   \left| I_n(\alpha;\lambda) \right| \leq I_n(\alpha; \Re \lambda) \leq  I_n(\alpha;0)  < \gamma_n^{-1},
\]
provided by Lemmas \ref{lem:I_lambda}, \ref{lem:I0monotone} and \ref{lem:I0limits}, together with $g_1<0$, gives
\[
     1 = - 2\rho_0 \frac{g_1}{g_0}  \left| \widehat W_n \right| \gamma_n  \left| I_n(\alpha;\lambda) \right| 
     < -2 \rho_0 \frac{g_1}{g_0}  \left| \widehat W_n \right| 
      \leq -2 \rho_0 \frac{g_1}{g_0} \widehat W_\mathbf{1},
\]
which is \eqref{eq:instab}.
\end{proof}

Lemma \ref{lem:stability} implies that a homogeneous state which is stable in the memoryless model
cannot be destabilized by introducing memory, for any $K\in\mathbb N$ and $\alpha>0$.
Memory may only stabilize an otherwise unstable homogeneous state,
but it cannot create an overall instability that is absent without memory.
However, if $\widehat W_n < 0$, memory can destabilize a particular Fourier mode $n\neq 0$ that was stable
in the memoryless model.
However, Lemma \ref{lem:stability} states that if this happens, then \eqref{eq:instab} holds,
i.e., the homogeneous state is already unstable in the memoryless model.
We give a particular example in Section \ref{sec:destab}.

%%%%%%%%%%%%%%%%%%%%%%%%%%%%%%%%
\section{Existence of spatially inhomogeneous steady states}\label{sec:Rabinowitz}
In this section we employ the Crandall--Rabinowitz bifurcation theorem
\cite{Rabinowitz} to prove, under suitable assumptions, the existence of
spatially inhomogeneous stationary solutions of the kinetic Fokker--Planck
equation. These solutions are parametrized by their total mass
and bifurcate from the homogeneous branch at values for which the linearized stationary
operator has a simple zero eigenvalue.

We consider the one-dimensional setting $d=1$, which simplifies the analysis slightly.
Namely, on the torus the stationary problem is invariant under spatial translations.
In 1D the critical Fourier mode gives rise to two phase directions corresponding to sine and cosine modes.
To recover a one-dimensional kernel, as required by the Crandall--Rabinowitz theorem, we
work in the even subspace associated with the reflection $(x,\yy)\mapsto(-x,-\yy)$.
This restriction is compatible with the equation because the sampling
kernel $W$ is assumed to be even. 
In higher spatial dimensions the multiplicities of critical wave vectors
stemming from translation invariances in all $d$ directions
need to be treated by imposing additional symmetries.

We recall the definition \eqref{def:I0alpha} of $I_n^0(\alpha)$, which we here write, for fixed $\alpha>0$, as a function of $\varrho>0$, i.e.,
%$I_0^k=I_0^k(\rho)$ reads
\[  %\label{I0rho}
   I_n^0(\alpha)(\varrho) = \frac{1}{\alpha^{2K+1}} \int_0^\infty 
      F_K(\tau)^2 \exp\left(-\frac{|q_n|^2 G(\varrho)^2}{2\alpha^{2K+1}} \widetilde V(\tau) \right) \,\d\tau.
\]
Note that $F_K=F_K(\tau)$ given by \eqref{def:FK} and $\widetilde V = \widetilde V(\tau)$ given by \eqref{def:widetildeV} do not depend on $\varrho$.

\begin{lemma} %[Existence of spatially inhomogeneous kinetic steady states]
\label{lem:kinetic-inhomogeneous-steady-states}
%Let $W\in L^1(\Omega)$ be even, nonnegative, and normalized as \eqref{eq:Wnorm}.
Let $G\in C^2(0,\infty)$ and for a fixed $\varrho_0>0$ assume that
\(  \label{ass:trans}
   \tot{}{\varrho} \left[ \varrho \frac{G'(\varrho)}{G(\varrho)} \right]_{\varrho=\varrho_0} < 0.
\)
As before, denote $g_0:=G(\varrho_0)$, $g_1:=G'(\varrho_0)$, and
assume that there exists an integer $k\geq1$ such that
\(   \label{ass:kyes}
    - \varrho_0 g_0 g_1 \widehat W_{k} |q_k|^2 I_0^k(\varrho_0) = 1,
\)
while for all $n\geq 1$ such that $n\neq k$,
\(     \label{ass:nno}
   -  \varrho_0 g_0 g_1 \widehat W_{n} |q_n|^2 I_n^0(\alpha)(\varrho_0)  \neq 1.
\)

Then a branch of spatially inhomogeneous stationary solutions bifurcates from the homogeneous state $f_0(\yy) = \varrho_0 M_0(\yy)$,
where $M_0=M[\varrho_0]$ is given by \eqref{eq:M0}--\eqref{eq:Sigma-integral}.
In particular, there exist $\delta>0$ and a curve
$s\mapsto (\varrho_s,f_s)$ %for $|s|<\delta$,
such that %$\varrho_0=\varrho_0$ and $f_0=f_0$
for every $|s|<\delta$, $f_s$ is a stationary solution of the nonlinear kinetic Fokker--Planck equation \eqref{eq:FPcompact}
with total mass $\varrho_s$, given by
\[
    f_s(x,\yy) = \varrho_s M[{\varrho_s}](\yy) +  s\Re\left( e^{iq_k x} \Phi_k(\yy)\right) + o(s),
\]
where
\(   \label{def:Phik}
    \Phi_k(\yy) =  - \varrho_0 g_0 g_1 \widehat W_{k}   \mathcal B_{k}^{-1}\Delta_{y^K}M[\varrho_0],
\)
and $\mathcal B_n\varphi  =  \mathcal L[\varrho_0]\varphi - iq_n y^1\varphi$,
with $\mathcal L[\varrho_0]$ defined in \eqref{def:L}.
Moreover, each $f_s$ is spatially inhomogeneous, with density
\[ 
   \rho[f_s](x) = \varrho_s + s\cos(q_k x) + o(s).
\]
%and therefore $\varrho_{f_s}$ is nonconstant for $s\neq0$ sufficiently small.
\end{lemma}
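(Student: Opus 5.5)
I will cast the stationary problem as a zero of a nonlinear map $\mathcal F(\varrho,h)=0$ and apply the Crandall--Rabinowitz theorem with the total mass $\varrho$ as bifurcation parameter. Concretely, I write $f=\varrho M[\varrho]+h$, where $h$ is even under $(x,\yy)\mapsto(-x,-\yy)$ and has zero total mass. I work in weighted spaces $X=\{h:\ h/M[\varrho_0]\in H^1_x L^2_\yy(M[\varrho_0])\}$ (or a hypoelliptic domain space) with target $Y$ the corresponding $L^2(M^{-1})$ space. The map is
\[
\mathcal F(\varrho,h):=-\y^1\partial_x(\varrho M[\varrho]+h)+\mathcal L[\rho[h]+\varrho](\varrho M[\varrho]+h).
\]
By Lemma~\ref{lem:steady}, $\mathcal F(\varrho,0)=0$ for every $\varrho>0$, which gives the trivial branch. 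Smoothness of $\mathcal F$ follows from $G\in C^2$ and boundedness of $h\mapsto W\conv\rho[h]$. Both symmetries are preserved, because $W$ is even and $\mathcal L$ commutes with $\yy\mapsto-\yy$.

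Next I compute $D_h\mathcal F(\varrho_0,0)$, which is exactly the right-hand side of \eqref{eq:linearized-h} minus the transport term. I decompose it in Fourier modes and use the even symmetry, under which $\widehat h_{-n}(\yy)=\widehat h_n(-\yy)$. On each mode $n\ne0$ the kernel equation is \eqref{eq:eigenvalue-problem} with $\lambda=0$, namely $\mathcal B_n\varphi=-\varrho_0g_0g_1\widehat W_n\psi\,\Delta_{y^K}M_0$.
\begin{itemize}
\item If $\psi=0$, then $\varphi=0$, because $0$ lies in the resolvent set of $\mathcal B_n$ for $n\neq 0$. This is the step I need from Lemma~\ref{lem:resolvent}, extended to the boundary $\Re\lambda=0$; I expect the hypocoercive estimates behind it to give it.
\item If $\psi\ne0$, a solution exists iff the dispersion relation \eqref{eq:dispI0n} holds with $\lambda=0$.
\end{itemize}
By \eqref{ass:kyes} and \eqref{ass:nno}, the kernel is therefore spanned by the single even function $\Re(e^{iq_kx}\Phi_k)$, with $\Phi_k$ as in \eqref{def:Phik} and normalized so that $\rho[\Phi_k]=1$. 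The mode $n=0$ is excluded by the zero-mass constraint together with the invertibility of $\mathcal L_0$ on mean-zero functions.

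For the Fredholm property I would show that $D_h\mathcal F$ is a compact (rank-one per mode, decaying in $n$) perturbation of the free operator $-\y^1\partial_x+\mathcal L_0$. The free operator is invertible on the zero-mass subspace, modulo the kernel of the $n=0$ part. This gives codimension one of the range. The range is characterized as the annihilator of the adjoint kernel element, which is obtained from the dual dispersion relation using $\mathcal C_k$. Uniform-in-$n$ resolvent bounds are needed here, and this is the main technical obstacle. I would try to get them from the explicit Gaussian (Feynman--Kac) representation from Section~\ref{subsec:linearized-stability}, which shows $|\langle 1,\mathcal B_n^{-1}\Delta M_0\rangle|\lesssim |q_n|^{-2}$. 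Combined with $\widehat W_n\to0$, this makes the feedback compact.

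Finally I verify the transversality condition $D^2_{\varrho h}\mathcal F(\varrho_0,0)[\Phi]\notin\operatorname{Range}$. Differentiating the mode-$k$ equation in $\varrho$, this reduces to
\[
\frac{\d}{\d\varrho}\Big[-\varrho\, G(\varrho)G'(\varrho)\,\widehat W_k|q_k|^2 I^0_k(\varrho)\Big]_{\varrho_0}\neq0 .
\]
I rewrite the bracket as $-\widehat W_k\,\big(\varrho G'/G\big)\,\big(|q_k|^2G^2I^0_k\big)$. By the change of variables in Lemma~\ref{lem:I0monotone}, $|q_k|^2G^2 I^0_k$ equals $2\int_0^\infty (F_K^2/\widetilde V')(\tau(u))e^{-u}\,\d u$, evaluated at the rescaled argument $\alpha^{2K+1}u/\gamma_k$. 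Since $\gamma_k$ grows as $G$ shrinks, this is nonincreasing in $\varrho$ when $G'<0$, by the monotonicity \eqref{eq:FK-V-monotone}. Together with \eqref{ass:trans}, the derivative of the product is strictly negative times $\widehat W_k>0$, hence nonzero. Crandall--Rabinowitz then yields the curve $(\varrho_s,f_s)$ with $f_s=\varrho_sM[\varrho_s]+s\Re(e^{iq_kx}\Phi_k)+o(s)$. Integrating in $\yy$ and using $\rho[\Phi_k]=1$ gives the stated density expansion, which is nonconstant for small $s\ne0$.
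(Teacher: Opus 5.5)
Your route is the same as the paper's: Crandall--Rabinowitz in the reflection-even, zero-mass subspace; the kernel obtained from the $\lambda=0$ dispersion relation using invertibility of $\mathcal B_n$; Fredholm index zero by compact perturbation; and transversality reduced to the $\varrho$-derivative of the dispersion function, settled by \eqref{eq:FK-V-monotone}. However, your final sign argument for transversality is wrong as written. You claim that $b(\varrho):=|q_k|^2G(\varrho)^2I_k^0(\varrho)$ is nonincreasing in $\varrho$ because ``$\gamma_k$ grows as $G$ shrinks''. In fact $\gamma_k=|q_k|^2G^2/2$ shrinks with $G$. So for $G'<0$ the rescaled argument $\alpha^{2K+1}u/\gamma_k$ increases with $\varrho$, and by \eqref{eq:FK-V-monotone} $b=2J(\beta_k(\varrho))$ (with $J,\beta_k$ as in \eqref{def:J}) is strictly \emph{increasing} in $\varrho$. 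With the direction you state, the argument does not close. Write $a(\varrho):=\varrho G'/G<0$. Then $(ab)'=a'b+ab'$, where $a'b<0$ by \eqref{ass:trans} but $ab'\geq 0$. The two terms compete, and no sign can be concluded.

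The conclusion is nevertheless true once the monotonicity is corrected: $b'>0$ gives $ab'<0$, hence $(ab)'<0$. The paper's bookkeeping makes this transparent and independent of the sign of $G'$. Using $\beta_k'=2(\ln G)'\beta_k$, the second term equals $2\varrho_0[(\ln G)']^2\beta_k J'(\beta_k)\le 0$ because $J$ is decreasing. The first term $[\varrho(\ln G)']'J(\beta_k)$ is strictly negative by \eqref{ass:trans}.

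A lesser point concerns the Fredholm step. Compactness of the feedback $\varphi\mapsto\varrho_0g_0g_1(W\ast\rho[\varphi])\Delta_{y^K}M_0$ is immediate, since it factors through convolution with $W\in L^2(\Omega)$, which is Hilbert--Schmidt. So your estimate on the scalar $\langle 1,\mathcal B_n^{-1}\Delta_{y^K}M_0\rangle$ is beside the point. What is actually needed is invertibility of the free operator on the zero-mass subspace, i.e.\ a bound on $\|\mathcal B_n^{-1}\|$ uniform in $n\neq 0$. The paper also only asserts this.
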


\begin{proof}
The stationary Fokker--Planck equation \eqref{eq:FPcompact} reads
\(   \label{eq:FPstat}
   \mathcal{L}[\rho[f]] f - y^1\cdot\grad_x f = 0,
\)
with $\rho[f]$ given by \eqref{def:rho}.
We define the mapping
\[
   \mathcal F(\varrho,h) := \mathcal L[\rho[f]] f  -  y^1\cdot\grad_x f \qquad\mbox{with } f:= \varrho M[\varrho]+h,
\]
with $M[\varrho]$ given by \eqref{eq:M0}--\eqref{eq:Sigma-integral}.
We verify that $\mathcal F$ satisfies the assumptions of the Crandall--Rabinowitz bifurcation theorem \cite{Rabinowitz}.

For the constant $\varrho_0>0$, Lemma \ref{lem:steady} states that $\varrho_0 M[\varrho_0]$ is the homogeneous steady state of \eqref{eq:FP} with mass $\varrho_0$. Therefore $\mathcal F(\varrho_0,0)=0$.

Let us now denote $M_0:=M[\varrho_0]$.
We shall work in the subspace $\mathcal H_{0}^{\mathcal R}$ of the weighted Hilbert space
$\mathcal H := L^2(\Omega\times\mathbb R^K;M_0^{-1}\,\d x\,\d\yy)$,
defined by
\[
   \mathcal H_{0}^{\mathcal R} :=  \left\{ h\in \mathcal H; \,  \mathcal Rh=h \mbox{ and }
        \int_{\Omega}\int_{\mathbb R^K} h(x,\yy)\,\d\yy\,\d x=0  \right\},
\]
with the reflection operator $\mathcal R h(x,\yy):=h(-x,-\yy)$.
The restriction to even functions $Rh=h$ is made in order to remove the degeneracy caused by translation invariance over the torus $\Omega$.
We then define the space $\mathcal X  := \left\{   h\in\mathcal H_0^{\mathcal R}: \mathcal L[\varrho_0]h-y^1\cdot\grad_x h\in\mathcal H  \right\}$,
equipped with the corresponding graph norm.
Note that since $\mathcal F$ is equivariant with respect to $\mathcal R$,
\[
    \mathcal F(\varrho,\mathcal Rh)=\mathcal R\mathcal F(\varrho,h),
\]
the restriction $\mathcal F:(0,\infty)\times\mathcal X \to   \mathcal H_{0}^{\mathcal R}$ is well-defined.
The $C^2$-smoothness of $\mathcal F$, as required by the Crandall--Rabinowitz theorem,
is inherited from the assumed $C^2$-regularity of $G$, together with the linearity of the maps
$h\mapsto\rho[h]$ and $\varrho\mapsto W\ast\varrho$.

%Since $W$ is smooth and $G\in C^3$, the map $\mathcal F$ is $C^2$ between the natural weighted hypoelliptic function spaces.
%Let us now fix $\varrho_0>0$.
The Fr\'echet derivative of $\mathcal F(\varrho_0,h)$ with respect to the second argument, evaluated at $h=0$,
is given by the linearization \eqref{eq:linearized-h}, i.e.,
\[
   D_h \mathcal F(\varrho_0,0) \varphi =  \mathcal{L}[\varrho_0] \varphi - y^1\cdot\grad_x \varphi + \varrho_0 g_0 g_1 (W\ast\rho[\varphi]) \laplace_{y^K} M[\varrho_0] \qquad\mbox{for } \varphi\in\mathcal X.
\]
%with $\int_\Omega \rho[h](x) \,\d x = 0$.
We decompose the above operator into Fourier modes, and following the same steps as in Section \ref{subsec:linearized-stability},
we conclude that the $n$-th Fourier mode of a nontrivial element of its kernel satisfies the dispersion relation \eqref{eq:disp0} with $\lambda=0$, i.e.,
\(  \label{cond:disp}
   - \varrho_0 g_0 g_1 \widehat W_{n} |q_n|^2 I_n^0(\alpha)(\varrho_0) = 1.
\)
Here we used the invertibility of $\mathcal B_n$, established in Remark \ref{rem:spectrum-Bn} of the Appendix.

By assumptions \eqref{ass:kyes}--\eqref{ass:nno}, condition \eqref{cond:disp} holds for exactly one positive Fourier mode $k\geq 1$.
The neutral zero mode corresponding to changes of total mass is excluded by the fixed-mass constraint.
%The zero Fourier mode is excluded by the vanishing total mass of $\varphi\in\mathcal X$.
Therefore, the kernel of $D_h \mathcal F(\varrho_0,0)$ in the even subspace $\mathcal X$ is one-dimensional and is spanned by
$\Re\left( e^{iq_k x} \Phi_k(\yy)\right)$, with $\Phi_k=\Phi_k(\yy)$ given by \eqref{def:Phik}.
Assumption \eqref{ass:kyes} implies the normalization $\rho[\Phi]=1$.
Moreover, by the Fredholm property of $\mathcal L$, obtained from the spectral analysis in Section \ref{sec:spectral} of the Appendix,
and stability of the Fredholm index under compact perturbation, $D_h \mathcal F(\varrho_0,0)$ is Fredholm of index zero on $\mathcal X$,
see, e.g., \cite{Kato}.

Then, if the Crandall--Rabinowitz \cite{Rabinowitz} transversality condition
\(   \label{cond:CR}
    \frac{\d}{\d\varrho} \left[ \varrho G(\varrho) G'(\varrho) \widehat W_{k} |q_k|^2 I_0^k(\varrho)   \right]_{\varrho=\varrho_0} \neq 0
\)
holds, there exists a local curve
\[
    s\mapsto (\varrho_s,h_s),    \qquad |s|<\delta,
\]
with $h_0=0$, such that $\mathcal F(\varrho_s,h_s)=0$, and
\[
    h_s(x,\yy) = s\Re\left( e^{iq_k x} \Phi_k(\yy)\right) + o(s),
\]
where $o(s)$ is taken with respect to the norm in $\mathcal X$.
Obviously, $f_s:= \varrho_s M[\varrho_s] + h_s$ is a family of spatially inhomogeneous 
stationary solutions of the nonlinear kinetic Fokker--Planck equation \eqref{eq:FPstat}.

It therefore remains to verify the transversality condition \eqref{cond:CR}.
It is convenient to define
\(   \label{def:J}
    J(\beta) := \beta \int_0^\infty F_K(\tau)^2 \exp\left( -\beta \widetilde V(\tau)  \right) \d\tau,
    \qquad
    \beta = \beta_k(\varrho) := \frac{|q_k|^2 G(\varrho)^2}{2\alpha^{2K+1}}.
\)
Then \eqref{cond:CR}, after division by $\widehat W_{k}|q_k|^2 \neq 0$, is then rewritten as
\[
    0 &\neq&  \tot{}{\varrho}\left[  \varrho \, (\ln G(\varrho))'  J(\beta_k(\varrho)) \right]_{\varrho=\varrho_0}  \\
      &=& [ \varrho \, (\ln G(\varrho))' ]' \, J(\beta_k(\varrho_0))  + \varrho_0\, (\ln G(\varrho))'  \, [J(\beta_k(\varrho))]', %\tot{}{\varrho} J(\beta_k(\varrho)),
\]
where all derivatives (marked by prime) are taken with respect to $\varrho$ and evaluated at $\varrho=\varrho_0$.
We readily have $\beta'_k(\varrho) = 2(\ln G(\varrho))' \beta_k(\varrho)$, so that we arrive at
\[
    0 \neq [ \varrho \, (\ln G(\varrho))' ]' J(\beta_k(\varrho_0))  + 2\varrho_0\, [(\ln G(\varrho))']^2 \beta_k(\varrho_0) J'(\beta_k(\varrho_0)).
\]
Since, by definition, $J(\beta)>0$ for all $\beta>0$, and, by assumption \eqref{ass:trans}, $[ \varrho \, (\ln G(\varrho))' ]' < 0$, the first term
of the right-hand side above is strictly negative. Consequently, it suffices to prove
that $J'(\beta) = \tot{}{\beta} J(\beta) < 0$ for all $\beta>0$, which makes the second term strictly negative as well.

From the proof of Lemma \ref{lem:I0monotone} we recall that $\widetilde V_K$ is strictly increasing
and maps $(0,\infty)$ to itself.
We then carry out the change of variables $u=\beta \widetilde V_K(\tau)$ in \eqref{def:J}, which gives
\[
    J(\beta) =    \int_0^\infty \frac{F_K(\tau)^2}{\widetilde V'(\tau)}  e^{-u}\,\d u
    \qquad\mbox{with } \tau = \widetilde V^{-1}(u/\beta).
\]
Moreover, in the proof of Lemma \ref{lem:I0monotone} we have shown that the function
$\tau \mapsto \frac{F_K(\tau)^2}{\widetilde V'(\tau)}$
is strictly increasing on $(0,\infty)$.
Then, since $\beta \mapsto \widetilde V^{-1}(u/\beta)$ is strictly decreasing in $\beta$ for each $u>0$,
we conclude that $J=J(\beta)$ is strictly decreasing on $(0,\infty)$.
\end{proof}

Let us finally consider the case when $W=W(x)$ is the normalized top-hat kernel $W= \frac{1}{2R}\chi_{[-R,R]}$ with $0<R<1/2$,
used for numerical simulations in \cite{BHW:2012} and \cite{EH:2026}.
Then
\(  \label{Wnhat}
    \widehat W_n = \frac{\sin(2\pi nR)}{2\pi n R}   \qquad \mbox{for }n\neq0.
\)
We prove that if the first Fourier mode is critical, then it is automatically simple.
Indeed, the criticality condition \eqref{ass:kyes} for $k=1$ reads
\[
    - 2 \varrho_0 \frac{g_1}{g_0} \widehat W_1 J(\beta_1)=1,
\]
with $J=J(\beta)$ and $\beta_1$ given by \eqref{def:J}.
Clearly, $\beta_1 < \beta_n$ for all $n\geq 2$, and since $J=J(\beta)$ is decreasing, we have $J(\beta_1) > J(\beta_n)$.

Moreover, Lemma \ref{lem:technical} gives, for any $\theta\in (0,\pi)$,
\[
    \sin(n\theta) \leq |\sin(n\theta)|<n\sin\theta \qquad\mbox{for all } n\geq 2.
\]
Using this with $\theta:=2\pi R\in(0,\pi)$ in \eqref{Wnhat}, we conclude that $\widehat W_1 > \widehat W_n$ for all $n\geq2$.
Therefore, $J(\beta_1) \widehat W_1 > J(\beta_n) \widehat W_n$ for all $n\geq 2$ such that $\widehat W_n > 0$,
and \eqref{ass:nno} holds.
Finally, if $\widehat W_n\leq 0$, then the left-hand side of
\eqref{ass:nno} is nonpositive, since $g_1<0$, and hence the mode cannot
be critical.

%%%%%%%%%%%%%%%%%%%%%%%%%%%%%%%%%%%%%%%%
\section{A visual example}\label{sec:visual}
The goal of this section is to demonstrate that for particular values of the model parameters
the dispersion relation \eqref{eq:general-K-dispersion-alpha} can be efficiently resolved numerically,
and information about the number of unstable Fourier modes
as a function of $K\in\N$ and $\alpha>0$ can be obtained.
With an example, we demonstrate that longer memory, measured in terms of $K/\alpha$
according to formula \eqref{eq:MemoryLength},
leads to fewer unstable Fourier modes, and, in effect, to less oscillatory aggregation patterns
(or, eventually, to no pattern formation at all).

Inspired by the extensive numerical simulations carried out in \cite{EH:2026},
we choose the response function $G(s)$ and the interaction kernel $W(x)$ as
\[
    G(s) := e^{-s}   \qquad \mbox{and} \qquad W(x) := \frac{\chi_{[0,R]}(|x|)}{2R},
\]
where $\chi_{[0,R]}$ denotes the characteristic function of the interval $[0,R]$,
{i.e.}, the kernel $W=W(x)$ corresponds to the normalized sampling radius $R>0$.
The Fourier coefficients of $W$ are given by \eqref{Wnhat}.
We choose $R:=0.05$ and unit total mass $\varrho_0:=1$.
Since $\frac{g_1}{g_0} = \frac{G'(\varrho_0)}{G(\varrho_0)} = -1$,
the condition for linear instability of the homogeneous steady state \eqref{eq:alphaCrit} reads
\(  \label{eq:visual1}
   \frac{\sin(2\pi nR)}{2\pi n R} > \frac12.
\)
If \eqref{eq:visual1} is verified, then for each $K\in\N$ there exists a unique critical value
$\bar\alpha=\bar\alpha(K,n)>0$, found as the solution of \eqref{eq:dispI0n},
such that the $n$-th Fourier mode is unstable for all $\alpha>\bar\alpha$
(i.e., for sufficiently short memory lengths).
It can be easily checked numerically that for $R=0.05$, \eqref{eq:visual1} holds iff $|n| \leq 6$.
We therefore focus on numerical search of the unique real root $\lambda\in\R$ of the dispersion
relation \eqref{eq:general-K-dispersion-alpha} for the first six nonzero Fourier modes.

We restrict ourselves to the cases $K=1,2,3$, where we derived the explicit dispersion
relations \eqref{eq:dispK1},  \eqref{eq:dispK2} and \eqref{eq:dispK3} in Sections \ref{subsec:K1}, \ref{subsec:K2} and \ref{subsec:K3}
of the Appendix.
We searched for their unique real roots $\lambda=\lambda_n(\alpha)\in\R$ numerically using the {\tt fzero} function of \textsc{Matlab},
with the code published in \cite{Matlab}. The results are plotted in Fig. \ref{fig:lambda}.
We also list the critical values $\bar\alpha=\bar\alpha(K,n)>0$ found as the unique solutions
of \eqref{eq:dispI0n} in Table \ref{tab:critAlpha}.
We observe that for each fixed $K$, the critical values are increasing with $n$.
Therefore, increasing the effective memory length
$K/\alpha$ by decreasing $\alpha$ reduces the number of unstable Fourier modes,
and, therefore, produces less oscillatory aggregation patterns in the linearized regime.
Ultimately, if $\alpha<\bar\alpha(K,1)$, all spatial Fourier modes are linearly stable, and the
linearized dynamics no longer predicts aggregation from small perturbations
of the homogeneous steady state.

\begin{figure}[ht]
{\centering
\resizebox*{0.27\linewidth}{!}{\includegraphics{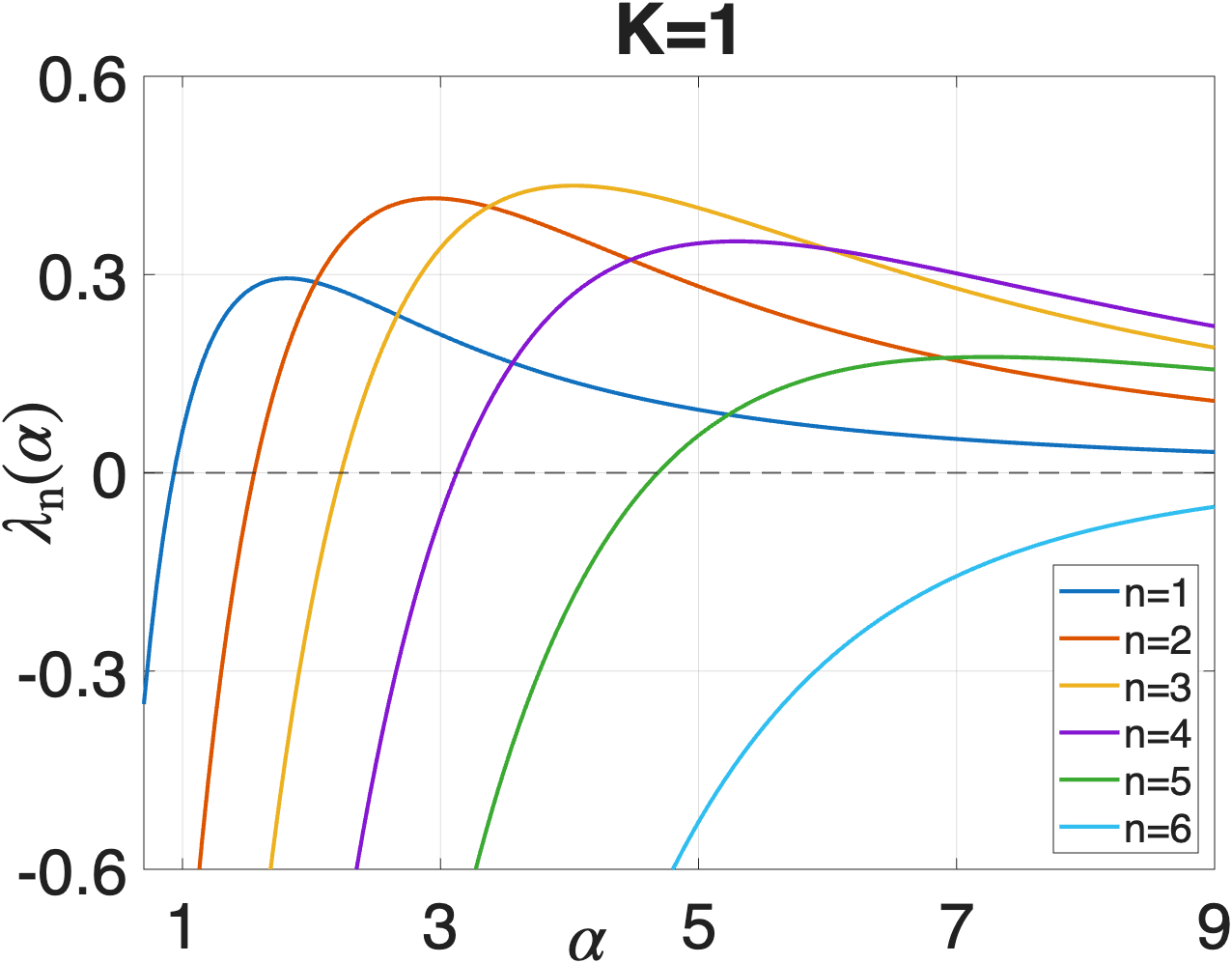}}
$\qquad$
\resizebox*{0.27\linewidth}{!}{\includegraphics{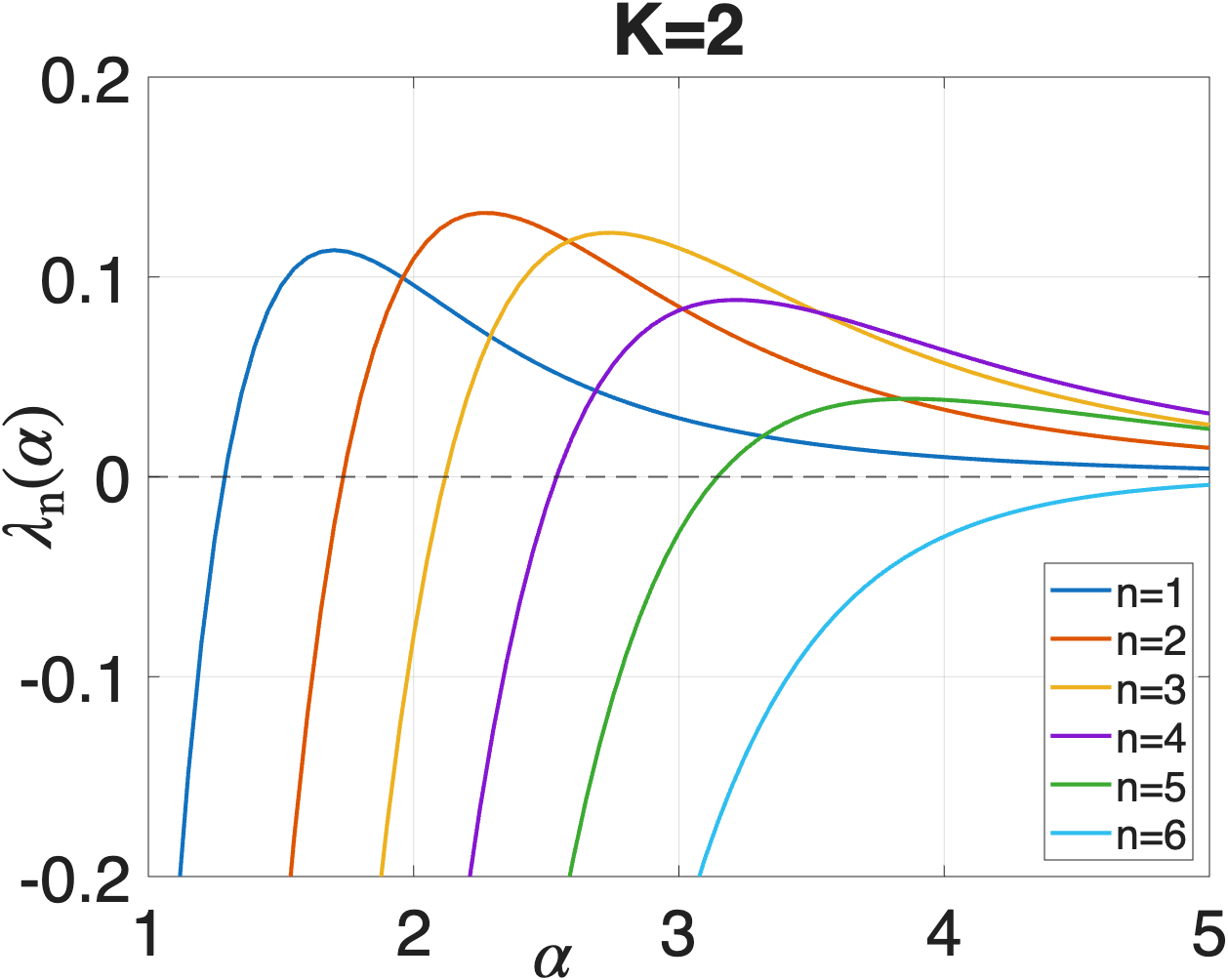}}
$\qquad$
\resizebox*{0.27\linewidth}{!}{\includegraphics{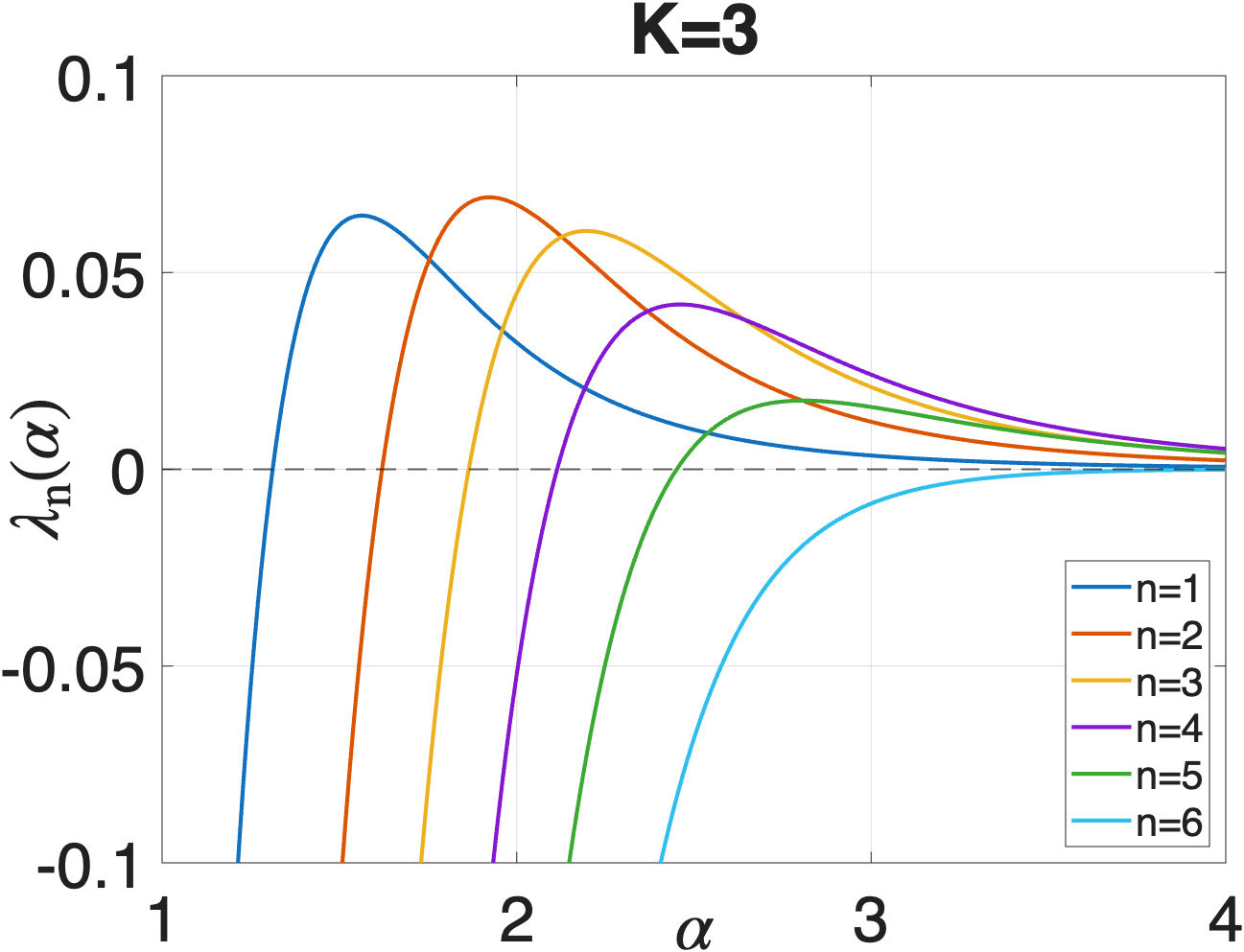}}
\par}
\caption{Unique real roots $\lambda=\lambda_n(\alpha)\in\R$ of the dispersion relation \eqref{eq:dispK1} for $K=1$
(left panel), \eqref{eq:dispK2} for $K=2$ (middle panel) and \eqref{eq:dispK3} for $K=3$ (right panel)
for the Fourier modes $n=1,2,\dots,6$.
\label{fig:lambda}}
\end{figure}

\begin{table}[ht]
\centering
\begin{tabular}{c c c c c c c}
\toprule
$K\backslash n$ & $1$ & $2$ & $3$ & $4$ & $5$ & $6$ \\
\midrule
$1$ & 0.932 & 1.555 & 2.229 & 3.125 & 4.684 & 17.381 \\
$2$ & 1.287 & 1.734 & 2.119 & 2.538 & 3.145 & 6.671 \\
$3$ & 1.312 & 1.621 & 1.865 & 2.115 & 2.452 & 4.168 \\
%$1$ & 0.9323 & 1.5552 & 2.2288 & 3.1247 & 4.6841 & 17.3811 \\
%$2$ & 1.2870 & 1.7335 & 2.1185 & 2.5384 & 3.1453 & 6.6709 \\
%$3$ & 1.3120 & 1.6210 & 1.8650 & 2.1150 & 2.4520 & 4.1680
%[0.9320    1.5550    2.2290    3.1250    4.6840   17.3810; 1.2870    1.7340    2.1190    2.5380    3.1450    6.6710; 1.3120    1.6210    1.8650    2.1150    2.4520    4.1680]
\bottomrule
\end{tabular}
\caption{Critical values of the relaxation parameter $\bar\alpha=\bar\alpha(K,n)>0$ found as the unique solutions
of \eqref{eq:dispI0n}, for $K=1$ (first row), $K=2$ (second row) and $K=3$ (third row),
and Fourier modes $n=1,\dots,6$.
The $n$-th Fourier mode is unstable for all $\alpha>\bar\alpha(K,n)$.}
\label{tab:critAlpha}
\end{table}

The comparison between different values of $K$ is more subtle, because
changing $K$ changes not only the effective memory length $K/\alpha$,
but also the shape of the memory kernel \eqref{eq:kappa},
cf. \cite[Figure 1]{EH:2026}.
To gain an insight into this phenomenon, we plotted the values of $K\bar\alpha(K,n)^{-1}$,
with $\bar\alpha(K,n)$ taken from Table \ref{tab:critAlpha}, against the Fourier mode numbers $n$.
The result is presented in Fig. \ref{fig:KoverBarAlpha}, where we clearly observe the monotone pattern.
The figure shall be interpreted as follows: The $n$-th Fourier mode
is unstable in the model with $K$ memory layers if $\frac{K}{\alpha} < \frac{K}{\bar\alpha(K,n)}$.
The plot tells us that increasing the memory length in terms of $K/\alpha$
leads to a reduction of the number of unstable Fourier modes, i.e., less oscillatory aggregation patterns,
or, ultimately, no patterns at all.

\begin{figure}[ht]
{\centering
\resizebox*{0.6\linewidth}{!}{\includegraphics{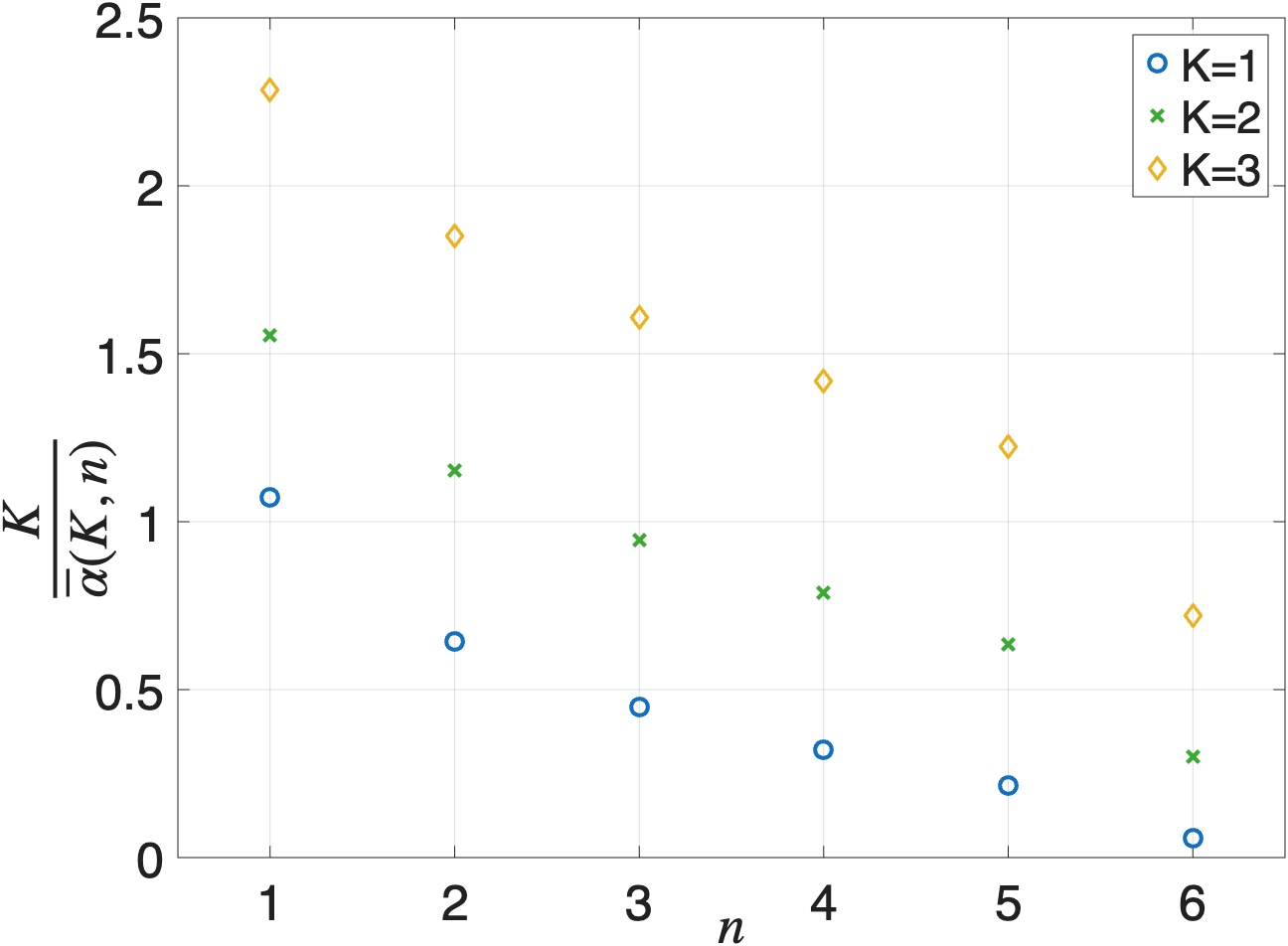}}
\par}
\caption{Values of the critical memory length $K{\bar\alpha(K,n)}^{-1}$ for $K=1$
(blue circles), $K=2$ (green crosses) and $K=3$ (orange diamonds),
for the Fourier modes $n=1,2,\dots,6$.
\label{fig:KoverBarAlpha}}
\end{figure}

%%%%%%%%%%%%%%%%%%%%%%%%%%%%%%%%%%%%%%%%
\section{Destabilization of a particular Fourier mode}\label{sec:destab}
We provide an example showing that the presence of memory
can destabilize a particular Fourier mode $n\neq 0$ that was stable
in the memoryless model.

We work in one spatial dimension and
choose $W(x) := \frac{1}{2R} {\chi_{[0,R]}(|x|)}$ together with
%the following parameters
\[
   K:=1,\qquad \varrho_0:=1,\qquad G(s):=e^{-25(s-1)},\qquad R:=0.05,\qquad n:=15.
\]
Then
\[
    g_0=1,\qquad g_1=-25,\qquad \widehat W_{15}=\frac{\sin(3\pi/2)}{3\pi/2} =-\frac{2}{3\pi}.
\]
We see that the mode $n=15$ is stable in the memoryless model because $\widehat W_{15}<0$
and so \eqref{eq:stability} is violated.
However, choosing $\alpha:=15$, numerical evaluation of \eqref{eq:dispK1} gives the roots
\[
   \lambda\approx 1.337 \pm 43.608\,i.
\]
Thus the mode is unstable in the memory model through an oscillatory instability.
But observe that the homogeneous state $\varrho_0=1$ is already unstable in the memoryless model, because
\[
   \widehat W_1=\frac{\sin(\pi/10)}{\pi/10}\approx0.98363>\frac1{50} = - \frac{g_0}{2g_1\varrho_0},
\]
and so \eqref{eq:stability} is verified for the first Fourier mode.
In fact, few more positive-Fourier modes are also unstable.

%%%%%%%%%%%%%%%%%%%%%%%%%%%%%%%%%%%%%%%%
\section{Appendix}\label{sec:App}

\subsection{Spectral analysis of $\mathcal B_n$}\label{sec:spectral}

Here we study the spectral properties of the operator $\mathcal B_n = \mathcal L_0 - i q_n\cdot \y^1$,
introduced for $n\neq 0$ in \eqref{def:Bn}, with $\mathcal L_0$ given by \eqref{def:L0}.
For this purpose we introduce the weighted Hilbert space $H:=L^2(M_0^{-1}\d\yy)$,
equipped with the inner product
\[
    \langle h_1,h_2\rangle_{H} := \int_{\mathbb R^{dK}} h_1(\yy)\overline{h_2(\yy)} \, M_0(\yy)^{-1} \,\d\yy,
\]
where $M_0$ is the invariant Gaussian defined in \eqref{eq:M0}.
The domain of $\mathcal B_n$ is the closure of $C_c^\infty(\mathbb R^{dK})$
with respect to the graph norm $\|h\|_{H}+\|\mathcal B_n h\|_{H}$.

We first prove a technical lemma.

\begin{lemma}  \label{lem:Aell-eK}
Let $K\geq 1$, $A\in\R^{K\times K}$ be given by \eqref{def:A}, and let $\ee_1,\dots, \ee_K$ denote the standard basis vectors of $\mathbb R^K$.
Then for all $\ell \in \{0, 1, \dots, K-1\}$ we have
\[
    A^\ell \ee_K = \ee_{K-\ell} + \mbox{l.c.}\{ \ee_{K-\ell+1},\dots,\ee_K \},
\]
where $\mbox{l.c.}$ stands for linear combination.
In particular, the vectors
\[
    \ee_K, A\ee_K, A^2\ee_K,\dots,A^{K-1}\ee_K
\]
are linearly independent.
%form a triangular family in the reversed basis $e_K,e_{K-1},\dots,e_1$, with all diagonal coefficients equal to $1$
\end{lemma}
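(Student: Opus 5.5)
The plan is a proof by induction on $\ell$, using only the bidiagonal structure of $A$ in \eqref{def:A}. Reading off the columns of $A$, we have
\[
   A\ee_1 = -\alpha_1\ee_1, \qquad A\ee_j = \ee_{j-1} - \alpha_j \ee_j \quad\mbox{for } j=2,\dots,K.
\]
Thus $A$ maps $\ee_j$ to $\ee_{j-1}$ plus a multiple of $\ee_j$. In other words, in the reversed ordering $\ee_K,\ee_{K-1},\dots,\ee_1$ each application of $A$ shifts the index down by one, with leading coefficient $1$.

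The base case $\ell=0$ is trivial, since $A^0\ee_K=\ee_K$ and the linear combination is empty. For the inductive step, assume that for some $\ell\le K-2$ we have
\[
   A^\ell\ee_K=\ee_{K-\ell}+\sum_{j=K-\ell+1}^K c_j\ee_j .
\]
Applying $A$ gives
\[
   A^{\ell+1}\ee_K = \ee_{K-\ell-1}-\alpha_{K-\ell}\ee_{K-\ell}+\sum_{j=K-\ell+1}^K c_j(\ee_{j-1}-\alpha_j\ee_j).
\]
Here $K-\ell\ge 2$, so the formula for $A\ee_{K-\ell}$ applies. Every term other than $\ee_{K-\ell-1}$ is a multiple of some $\ee_i$ with $i\ge K-\ell$. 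This is exactly the claimed form for $\ell+1$.

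For linear independence, form the $K\times K$ matrix whose columns are $\ee_K, A\ee_K,\dots,A^{K-1}\ee_K$. By what was just shown, the column $A^\ell\ee_K$ has entry $1$ in row $K-\ell$ and zeros in all rows $i<K-\ell$. After reversing the order of the rows, the matrix becomes upper triangular with unit diagonal, so its determinant is $\pm1\neq0$. The same conclusion can also be reached directly: if $\sum_\ell a_\ell A^\ell\ee_K=0$, look at the smallest index $\ee_{K-m}$ where $m$ is the largest $\ell$ with $a_\ell\ne0$. Only $A^m\ee_K$ contributes to that coordinate, which forces $a_m=0$, a contradiction.

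The argument has no real obstacle. The only point needing care is the index bookkeeping in the inductive step: one must check that the correction terms $\ee_{j-1}$ with $j\ge K-\ell+1$ land at indices $\ge K-\ell$, so they never reach the new leading index $K-\ell-1$. The case $K=1$ is covered by the base case alone.
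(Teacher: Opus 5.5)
Your proof is correct and follows essentially the same route as the paper: induction on $\ell$ using $A\ee_j=\ee_{j-1}-\alpha_j\ee_j$, where the paper adopts the convention $\ee_0:=0$ instead of treating $A\ee_1$ separately. Your triangular-matrix argument for linear independence makes explicit the step the paper leaves as immediate.
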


\begin{proof}
The statement obviously holds for $\ell=0$.
We proceed inductively, assuming that it holds up to some $\ell\in [K-2]$.
A simple calculation reveals that
\[
    A \ee_j = \ee_{j-1} - \alpha_j \ee_j \qquad\mbox{for all } j\in[K],
\]
with the convention $\ee_0:=0$.
Consequently, we have
\[
   A \left( A^\ell \ee_K \right) &=& A \ee_{K-\ell} + A \left( \mbox{l.c.}\{ \ee_{K-\ell+1},\dots,\ee_K \} \right) \\
      &=& \ee_{K-\ell-1} - \alpha_{K-\ell} \ee_{K-\ell} + \mbox{l.c.}\{ \ee_{K-\ell},\dots,\ee_K \}.
\]
We conclude that
\[
   A^{\ell+1} \ee_K = \ee_{K-(\ell+1)} + \mbox{l.c.}\{ \ee_{K-\ell},\dots,\ee_K \}.
\]
\end{proof}

\begin{lemma}\label{lem:resolvent}
For any $n\neq 0$, $\mathcal B_n$ given by \eqref{def:Bn} generates a contraction semigroup on $\mathcal H$.
Consequently, the resolvent set of $\mathcal B_n$ contains the open right complex plane $\{\lambda\in\C;\, \Re\lambda >0 \}$
and the Laplace transform representation holds,
\(   \label{eq:resolvent}
    (\lambda-\mathcal B_n)^{-1} h = \int_0^\infty e^{-\lambda t}e^{t\mathcal B_n}h\,\d t \qquad \mbox{for all } \Re\lambda>0.
\)
\end{lemma}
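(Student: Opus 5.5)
The plan is to show that $\mathcal B_n$ is dissipative on $H=L^2(M_0^{-1}\d\yy)$ and that $\lambda-\mathcal B_n$ has dense range for some (hence all) $\lambda>0$. The Lumer--Phillips theorem then gives a contraction semigroup. The resolvent statement and the Laplace representation \eqref{eq:resolvent} follow from the Hille--Yosida theory, since $\|e^{t\mathcal B_n}\|\le 1$ makes the integral absolutely convergent for $\Re\lambda>0$.

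\textbf{Dissipativity.} We write $h=M_0 u$ for $u\in C_c^\infty$ and work with the conjugated operator $M_0^{-1}\mathcal L_0(M_0 u)$ on $L^2(M_0\,\d\yy)$. Using $\mathcal L_0M_0=0$ and the Lyapunov equation \eqref{eq:Lyapunov-Sigma}, this operator decomposes into two parts. The first is the symmetric Ornstein--Uhlenbeck part $\frac{g_0^2}{2}M_0^{-1}\nabla_{y^K}\cdot(M_0\nabla_{y^K}u)$. The second is a first-order part $b\cdot\nabla_\yy u$ with $\nabla_\yy\cdot(bM_0)=0$, where $b$ is the standard decomposition $A\yy=-\frac{g_0^2}{2}\ee_K\ee_K^T\Sigma^{-1}\yy+b$. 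The first-order part is therefore antisymmetric in $L^2(M_0)$. Integration by parts gives
\[
\Re\langle \mathcal L_0 h,h\rangle_H=-\frac{g_0^2}{2}\int |\nabla_{y^K}(h/M_0)|^2M_0\,\d\yy\le 0 .
\]
The term $-iq_n\cdot y^1$ is a purely imaginary multiplication operator, so it contributes nothing to the real part. Hence $\Re\langle\mathcal B_nh,h\rangle_H\le0$ on the core $C_c^\infty$, and this extends to the closure of $\mathcal B_n$.

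\textbf{Range condition.} This is the main obstacle, because $\mathcal B_n$ is degenerate (hypoelliptic) and carries the unbounded potential $iq_n\cdot y^1$. My first approach is to apply the same dissipativity argument to the formal adjoint $\mathcal B_n^*$ on $H$. In the conjugated picture this adjoint is $\frac{g_0^2}{2}(\text{OU part})-b\cdot\nabla+iq_n\cdot y^1$, which has the same structure and is therefore also dissipative on $C_c^\infty$. The domain issue then reduces to essential m-dissipativity of $\mathcal B_n$ on $C_c^\infty$. I would establish this by showing that any $v\in H$ with $(\lambda-\mathcal B_n^*)v=0$ in the distributional sense vanishes. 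Hypoellipticity, via Lemma \ref{lem:Aell-eK} (Hörmander's bracket condition, since $\ee_K,A\ee_K,\dots,A^{K-1}\ee_K$ span $\R^K$), gives $v\in C^\infty$. A cutoff argument with $\chi_R(\yy)=\chi(\yy/R)$ then gives $\lambda\|\chi_Rv\|^2\le$ commutator terms. These commutator terms are bounded by $R^{-1}\|\,|\yy|\chi'_R v\|\,\|v\|$-type quantities, because $b$ is linear. To control them I would first bootstrap $|\yy|v\in L^2(M_0)$ or use a Gaussian-weighted cutoff. Letting $R\to\infty$ yields $v=0$.

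An alternative for the range condition avoids this analysis. Via the Feynman--Kac formula already used in the paper, we can explicitly define $T_th(\yy)=\mathbb E[h(Y_t)\exp(-iq_n\cdot\int_0^tY^1_s\,\d s)]$ on the adjoint side. This family is a contraction semigroup on $L^2(M_0)$, because $|\exp(\cdot)|=1$ and $M_0$ is invariant. One then identifies its generator with the closure of the adjoint of $\mathcal B_n$ on the Schwartz core, where both operators agree and the core is invariant under the Gaussian OU flow. Dualizing then gives the contraction semigroup $e^{t\mathcal B_n}$ on $H$. I expect this second route to be the cleaner one to write.
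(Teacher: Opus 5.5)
Your proposal is correct, and your preferred second route is essentially the paper's proof. It has the same dissipativity identity $\Re\langle\mathcal B_n h,h\rangle_H=-\frac{g_0^2}{2}\int|\nabla_{y^K}(h/M_0)|^2M_0\,\d\yy$. It then uses the Feynman--Kac representation of $e^{t\mathcal C_{-n}}$, which is a contraction on $L^2(M_0\,\d\yy)$ because the phase factor is unimodular and $M_0$ is invariant, and transfers back to $H$ by duality through the isometry $h\mapsto h/M_0$.

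Your first route (H\"ormander hypoellipticity plus a cutoff argument giving $\ker(\lambda-\mathcal B_n^*)=\{0\}$) is a useful complement rather than a detour. It yields essential m-dissipativity on $C_c^\infty$, and so identifies the generator with the graph-norm closure from $C_c^\infty$ that the paper takes as the domain of $\mathcal B_n$; the duality argument leaves this point implicit. In that route, pairing with $\chi_R^2\bar v$ in $L^2(M_0\,\d\yy)$ makes the antisymmetric drift contribute only $C\int_{R\le|\yy|\le 2R}|v|^2M_0\,\d\yy$, since $|b|\le C|\yy|\le 2CR$ on the support of $\nabla\chi_R$, so no bootstrap of $|\yy|v$ is needed.
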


\begin{proof}
We first prove that $\mathcal B_n$ is dissipative.
For smooth, rapidly decaying functions $h=M_0 f$ we have
\[
    \langle \mathcal L_0h,h\rangle_{H}
    = \int_{\R^{dK}} \mathcal L_0(M_0f)\,\overline f\,\d\yy 
    = \int_{\R^{dK}}M_0 f\,\mathcal L_0^\ast \overline f\,\d\yy ,
\]
where the Lebesgue adjoint $\mathcal L_0^\ast$ is the backward Ornstein--Uhlenbeck generator
\[
    \mathcal L_0^\ast f = (A\yy)\cdot\nabla_\yy f  +  \frac{g_0^2}{2}\Delta_{y^K}f.
\]
Using the identity
\[
    \mathcal L_0^\ast |f|^2 = 2\operatorname{Re}\left(f\, \mathcal L_0^\ast \overline f\right) + g_0^2|\nabla_{y^K}f|^2,
\]
we obtain
\[
\begin{aligned}
    \operatorname{Re}\langle \mathcal L_0 h,h \rangle_{H}
    &=
    \frac12\int_{\R^{dK}}M_0 \mathcal L_0^\ast |f|^2\,\d\yy
    -
    \frac{g_0^2}{2}\int_{\mathbb R^{dK}}|\nabla_{y^K}f|^2M_0\,\d\yy .
\end{aligned}
\]
Then, since $\mathcal L_0 M_0 = 0$, we have
\[
    \Re \langle \mathcal L_0 h,h \rangle_{H}
    =  -\frac{g_0^2}{2}  \int_{\R^{dK}}    \left|\nabla_{y^K}\left(\frac{h}{M_0}\right)\right|^2  M_0\,\d\yy.
\]
Finally, since
\[
    \left\langle -iq_n\cdot y^1h,h\right\rangle_{H}
    =   -i \int_{\mathbb R^{dK}}(q_n\cdot y^1)|h|^2M_0^{-1}\,\d\yy
\]
is purely imaginary, we arrive at
\(   \label{eq:diss}
    \Re \langle \mathcal B_n  h,h  \rangle_{H}
    =  -\frac{g_0^2}{2}  \int_{\R^{dK}} \left|\nabla_{y^K}\left(\frac{h}{M_0}\right)\right|^2   M_0\,\d\yy   \leq 0.
\)
By a standard density argument, we conclude that $\mathcal B_n$ is dissipative on its domain.

%The preceding identity proves dissipativity of $\mathcal B_n$ in $\mathcal H=L^2(M_0^{-1}\,d\yy)$. 
We now note that the drift matrix $A$ has all eigenvalues $-\alpha_k<0$ and,
by Lemma \ref{lem:Aell-eK}, satisfies the Kalman condition
\(  \label{eq:Kalman}
    \operatorname{span} \left\{\ee_K,A\ee_K,\dots,A^{K-1}\ee_K \right\} = \mathbb R^K.
\)
Therefore, the associated Ornstein--Uhlenbeck operator $\mathcal L_0^\ast$ generates a strongly continuous contraction semigroup in $L^2(M_0\,\d\yy)$, see, e.g., \cite{Metafune}.
Recalling the backward Fourier-mode operator \eqref{def:Cn}, we have %with the opposite sign
\[
    \mathcal C_{-n}   =    \mathcal L_0^\ast+iq_n\cdot y^1.
\]
By the Feynman--Kac formula \cite{Oksendal, Mao:2007}, its semigroup is given by
\[
       e^{t\mathcal C_{-n}} \varphi(\yy) = \mathbb E \left[ \exp\left(iq_n\cdot\int_0^t Y_s^1\,ds\right) \varphi(Y_t) \Bigg| \, Y_0= \yy \right],
\]
where $(Y_s)_{s\geq0}$ is the Ornstein--Uhlenbeck process generated by $\mathcal L_0^\ast$ and started from $Y_0=\yy$.
Then, by Jensen's inequality,
\[
    \left|  e^{t\mathcal C_{-n}} \varphi(\yy)   \right|^2    \leq    \mathbb E \left[ |\varphi(Y_t)|^2 \Bigg| \, Y_0= \yy \right],
\]
and integrating with respect to the invariant measure $M_0\,\d\yy$ gives
\[
    \Norm{e^{t\mathcal C_{-n}} \varphi}_{L^2(M_0)}^2  \leq \int_{\mathbb R^K}|\varphi(\yy)|^2M_0(\yy)\,d\yy = \Norm{\varphi}_{L^2(M_0)}^2.
\]
Thus $\mathcal C_{-n}$ generates a contraction semigroup on $L^2(M_0\,d\yy)$.

Finally, define the isometry
\[
    \mathcal{U}: H\to L^2(M_0\,d\yy),    \qquad    \mathcal{U} h:=\frac{h}{M_0}.
\]
Then $\mathcal{U}\mathcal B_n \mathcal{U}^{-1}$ %\[ \widetilde{\mathcal B}_n :=   U\mathcal B_nU^{-1}  \]
is the $L^2(M_0\,\d\yy)$-adjoint of $\mathcal C_{-n}$,
%Indeed, for smooth functions $u,v$,
%\[  \left\langle \widetilde{\mathcal B}_n u,v\right\rangle_{L^2(M_0)} =   \left\langle u,\mathcal C_{-n}v\right\rangle_{L^2(M_0)} . \]
and thus generates a contraction semigroup on $L^2(M_0\,d\yy)$.
By the isometry $\mathcal{U}$, we  conclude that the operator $\mathcal B_n$ generates a contraction semigroup on $\mathcal H$.
%Equivalently, the closure of $\mathcal B_n$ is maximal dissipative, and, therefore, $\lambda-\mathcal B_n$ is surjective for $\Re \lambda>0$.
%By the Lumer--Phillips theorem \cite{Lumer}, $\mathcal B_n$ generates a contraction semigroup on $\mathcal H$.
\end{proof}

\begin{remark}\label{rem:spectrum-Bn}
With standard arguments, one can strengthen the statement of Lemma \ref{lem:resolvent}.
Namely, using the hypoelliptic Ornstein--Uhlenbeck structure of $\mathcal B_n$, satisfying the Kalman condition \eqref{eq:Kalman},
and the confinement induced by the Gaussian invariant measure $M_0$, one obtains that the resolvent \eqref{eq:resolvent} is compact;
see, e.g., \cite{Metafune, Ottobre}.
Consequently, the spectrum of $\mathcal B_n$ consists only of isolated eigenvalues of finite algebraic multiplicity, with possible accumulation only at infinity.

Moreover, for $n\neq0$, one can exclude eigenvalues on the imaginary axis. Indeed, if $\mathcal B_n h=i\omega h$,
then the dissipativity \eqref{eq:diss} gives
\[
    \nabla_{y^K}\left(\frac{h}{M_0}\right)=0.
\]
Using the commutator structure of the Ornstein--Uhlenbeck chain and \eqref{eq:Kalman},
this degeneracy propagates through the variables $y^{K-1},\dots,y^1$, implying that $h/M_0$ is constant. But then the equation
$\mathcal B_n h=i\omega h$ would require
\[
    -iq_n\cdot y^1 h=i\omega h,
\]
which is impossible for $n\neq0$ unless $h=0$. Hence $\mathcal B_n$ has no eigenvalues on $i\mathbb R$.

Since the resolvent operator is compact, the absence of eigenvalues on $i\mathbb R$ implies the absence of spectrum on $i\mathbb R$.
Therefore, the statement of Lemma \ref{lem:resolvent} can be extended to assert that the resolvent set of $\mathcal B_n$, $n\neq 0$,
contains the closed right plane $\{\lambda\in\C; \Re\lambda\geq 0\}$.
%Thus the free kinetic operator has no spectrum in the closed right half-plane for any nonzero spatial Fourier mode.
\end{remark}

%%%%%%%%%%%%%%%%%%
\subsection{A technical lemma}

\begin{lemma}\label{lem:technical}
For any $\theta\in [0,\pi]$ and $n\in\Z$ with $|n|\geq 2$ we have
\[
      |\sin(n\theta)| \leq |n|\sin\theta,
\]
and the inequality is strict if $\theta\in (0,\pi)$.
\end{lemma}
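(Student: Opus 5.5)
We prove $|\sin(n\theta)|\le |n|\sin\theta$ by induction on $|n|$, and then track where equality can occur to get strictness. Since $\sin(-n\theta)=-\sin(n\theta)$, both sides depend only on $|n|$, so it suffices to treat $n\geq 1$. On $[0,\pi]$ we have $\sin\theta\geq 0$, so the right-hand side is nonnegative. The case $n=1$ is trivial, holding with equality.

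For the inductive step we use the addition formula
\[
   \sin((n+1)\theta)=\sin(n\theta)\cos\theta+\cos(n\theta)\sin\theta .
\]
Combining it with $|\cos\theta|\le 1$, $|\cos(n\theta)|\le1$ and the induction hypothesis gives
\[
   |\sin((n+1)\theta)|\le |\sin(n\theta)|\,|\cos\theta|+|\cos(n\theta)|\sin\theta\le n\sin\theta+\sin\theta=(n+1)\sin\theta,
\]
which proves the non-strict inequality for all $n\ge1$.

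For strictness, fix $\theta\in(0,\pi)$, so that $\sin\theta>0$ and $|\cos\theta|<1$. We argue again by induction, now for $n\geq 2$, and run the case $n=2$ separately as the base. The identity $|\sin 2\theta|=2\sin\theta\,|\cos\theta|$ together with $|\cos\theta|<1$ gives $|\sin 2\theta|<2\sin\theta$.

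For the step from $n\ge2$ to $n+1$ we reuse the chain of inequalities above; it is enough that one link is strict. The first link satisfies
\[
   |\sin(n\theta)|\,|\cos\theta|\le n\sin\theta\,|\cos\theta|<n\sin\theta,
\]
and the strict inequality holds because $n\sin\theta>0$ and $|\cos\theta|<1$. Hence $|\sin((n+1)\theta)|<(n+1)\sin\theta$.

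The only point requiring care is the strictness. At $n=1$ there is equality, so the base case for strictness must be $n=2$. The strict step then comes from the factor $|\cos\theta|<1$ multiplying the positive quantity $n\sin\theta$; this is exactly why $\theta$ must lie in the open interval $(0,\pi)$.
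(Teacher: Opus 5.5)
Your proof is correct, but it takes a different route from the paper's. The paper uses the Chebyshev identity $\sin(n\theta)/\sin\theta = U_{n-1}(\cos\theta) = e^{i(n-1)\theta}+e^{i(n-3)\theta}+\cdots+e^{-i(n-1)\theta}$. This writes the quotient as a sum of $n$ unimodular complex numbers, so the bound follows from a single application of the triangle inequality. Strictness then comes from the equality case: all $n\ge 2$ terms would have to coincide, which forces $e^{2i\theta}=1$, i.e. $\theta\in\{0,\pi\}$.

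You instead induct with the angle-addition formula. Your argument is entirely real and self-contained, with no Chebyshev identity needed. It also gives a quantitative deficit: $(n+1)\sin\theta-|\sin((n+1)\theta)|\ge n\sin\theta\,(1-|\cos\theta|)$. The paper's route is shorter and explains the bound structurally, since the quotient is a trigonometric polynomial with $n$ unit coefficients, at the price of quoting that identity.

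One small remark: your strict step uses only the non-strict bound $|\sin(n\theta)|\le n\sin\theta$, together with $|\cos\theta|<1$ and $n\sin\theta>0$. So it already gives strictness at $n+1$ for every $n\ge 1$, including the step $1\to 2$. The separate base case $n=2$, and indeed any induction on the strict statement, is therefore unnecessary, though harmless.
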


\begin{proof}
Let $n\geq 2$. For $\theta=0$ and $\theta=\pi$ the claim is trivial.
We therefore take $\theta\in (0,\pi)$ and use the fact that $\frac{\sin(n\theta)}{\sin\theta} = U_{n-1}(\cos\theta)$,
where $U_{n-1}$ is the Chebyshev polynomial of the second kind.
The exponential representation formula \cite{Mason:2002} gives, for $n\geq2$,
\[
    \frac{\sin(n\theta)}{\sin\theta}  =  e^{i(n-1)\theta}+e^{i(n-3)\theta}+\cdots+e^{-i(n-1)\theta}.
\]
Since the right-hand side is a sum of $n$ complex numbers of modulus one, 
the triangle inequality gives $\left|\frac{\sin(n\theta)}{\sin\theta}\right|\leq n$.
For $n\leq -2$ the claim follows by symmetry.

The inequality is strict for $\theta\in(0,\pi)$, since equality in the
triangle inequality would require all phases to coincide, which would imply
$e^{2i\theta}=1$, i.e., $\theta=0$ or $\theta=\pi$. Hence
\[
    \sin(n\theta) \leq |\sin(n\theta)|<n\sin\theta \qquad\mbox{for all } n\geq 2.
\]
\end{proof}

%%%%%%%%%%%%%%%%%%
\subsection{Explicit dispersion relation for $K=1$}\label{subsec:K1}
Formula \eqref{eq:Cj} for $K=1$ reads
\begin{equation}
\label{eq:b1-def}
    C_1(t)  =  \int_0^t e^{-\alpha s}\,\d s  =  \frac{1-e^{-\alpha t}}{\alpha}.
\end{equation}
Moreover, if $Y_t$ is the stationary Ornstein--Uhlenbeck process
$\d Y_t = -\alpha Y_t\,\d t + g_0\,\d B_t$,
then, for one spatial component,
\[
    \operatorname{Cov}_{M_0}(Y_s,Y_u)  =  \frac{g_0^2}{2\alpha}e^{-\alpha|s-u|}.
\]
Consequently,
\(   \label{eq:Cov}
    V(t)  =  \operatorname{Var}_{M_0}   \left(    \int_0^t Y_s\,\d s  \right) 
     &=&   \int_0^t\int_0^t    \operatorname{Cov}_{M_0}(Y_s,Y_u)\, \d u\,\d s   \\
    %&=& \frac{g_0^2}{2\alpha}  \int_0^t\int_0^t   e^{-\alpha|s-u|}\,\d u\,\d s \\
    &=& \frac{g_0^2}{\alpha^2} t   -  \frac{g_0^2}{\alpha^3}  \left(1-e^{-\alpha t}\right).
    \nonumber
\)
Substituting into \eqref{eq:general-K-dispersion-alpha}, the dispersion relation becomes
\begin{equation}
\label{eq:dispK1}
    1 = -\varrho_0 g_0 g_1\,\widehat W_n |q_n|^2  \int_0^\infty  e^{-\lambda t}  \left( \frac{1-e^{-\alpha t}}{\alpha}  \right)^2
    \exp\left[  -\frac{|q_n|^2 g_0^2}{2\alpha^3}  \left( \alpha t - 1 + e^{-\alpha t} \right)  \right] \,\d t.
\end{equation}

%%%%%%%%%%%%%%%%%%
\subsection{Explicit dispersion relation for $K=2$}\label{subsec:K2}
For $K=2$ and $\alpha_1=\alpha_2=\alpha>0$,
%the internal Ornstein--Uhlenbeck chain is
%\[  dY_s^1=(Y_s^2-\alpha Y_s^1)\,ds,\qquad dY_s^2=-\alpha Y_s^2\,ds+g_0\,dB_s . \]
we have
%\[   A=\begin{pmatrix} -\alpha & 1\\ 0 & -\alpha \end{pmatrix}, \]
\[
    e^{sA}=e^{-\alpha s}\begin{pmatrix} 1 & s\\ 0 & 1 \end{pmatrix},\qquad
    \Sigma=g_0^2\begin{pmatrix} \frac{1}{4\alpha^3} & \frac{1}{4\alpha^2}\\[2mm] \frac{1}{4\alpha^2} & \frac{1}{2\alpha} \end{pmatrix}.
\]
Consequently, \eqref{eq:Cj} reads
\[
    C_2(t)=\int_0^t(e^{sA})_{12}\,\d s=\int_0^t e^{-\alpha s}s\,\d s=\frac{1-e^{-\alpha t}(1+\alpha t)}{\alpha^2},
\]
and for $s$,  $u \geq 0$ we have, by stationarity and symmetry,
\[
    \operatorname{Cov}_{M_0}(Y_s^1,Y_u^1)
    =\left(e^{|s-u|A}\Sigma\right)_{11}
     %=e^{-\alpha(s-u)}\left(\Sigma_{11}+(s-u)\Sigma_{21}\right) \\
    =\frac{g_0^2}{4\alpha^3}e^{-\alpha|s-u|}\left(1+\alpha|s-u|\right).
\]
Inserting this into the formula \eqref{eq:Cov},
%\[  V(t) = \operatorname{Var}_{M_0}\left(\int_0^tY_s^1\,\d s\right) = \int_0^t\int_0^t\operatorname{Cov}_{M_0}(Y_s^1,Y_u^1)\,\d u\,\d s, \]
a simple calculation yields
\[
    V(t)
       %&=2\int_0^t\int_0^s \frac{g_0^2}{4\alpha^3}e^{-\alpha(s-u)}\left(1+\alpha(s-u)\right)\,du\,ds \\
       %&=\frac{g_0^2}{2\alpha^3}\int_0^t(t-\tau)e^{-\alpha\tau}(1+\alpha\tau)\,d\tau, \\
    =\frac{g_0^2}{2\alpha^5}\left(  2\alpha t-3+(\alpha t+3)e^{-\alpha t}\right).
\]
Substituting into \eqref{eq:general-K-dispersion-alpha}, we arrive at
\begin{equation}
\label{eq:dispK2}
\begin{aligned}
    1  &=
    -\varrho_0 g_0 g_1\widehat W_n |q_n|^2 \int_0^\infty e^{-\lambda t}\left(\frac{1-e^{-\alpha t}(1+\alpha t)}{\alpha^2}\right)^2  \\
    &\qquad\qquad\qquad\times
    \exp\left[ -\frac{|q_n|^2g_0^2}{4\alpha^5}\left(2\alpha t-3+(\alpha t+3)e^{-\alpha t}\right)\right] \d t.
\end{aligned}
\end{equation}

%%%%%%%%%%%%%%%%%%%%%%%%%%%%%%%%%%%%%
\subsection{Explicit dispersion relation for $K=3$}\label{subsec:K3}
For $K=3$ and $\alpha_1=\alpha_2=\alpha_3=\alpha>0$, we have
\[
    e^{sA}
    =
    e^{-\alpha s}
    \begin{pmatrix}
        1 & s & \frac{s^2}{2} \\
        0 & 1 & s \\
        0 & 0 & 1
    \end{pmatrix}, \qquad
    \Sigma
    =
    g_0^2
    \begin{pmatrix}
        \frac{3}{16\alpha^5} & \frac{3}{16\alpha^4} & \frac{1}{8\alpha^3} \\[2mm]
        \frac{3}{16\alpha^4} & \frac{1}{4\alpha^3} & \frac{1}{4\alpha^2} \\[2mm]
        \frac{1}{8\alpha^3} & \frac{1}{4\alpha^2} & \frac{1}{2\alpha}
    \end{pmatrix}.
\]
Consequently, \eqref{eq:Cj} reads
\[
    C_3(t) = \int_0^t(e^{sA})_{13}\,\d s
    = \frac12\int_0^t e^{-\alpha s}s^2\,\d s
    = \frac{1-e^{-\alpha t}\left(1+\alpha t+\frac{(\alpha t)^2}{2}\right)}{\alpha^3},
\]
and for $s$, $u\geq 0$ we have, by stationarity and symmetry,
\[
\begin{aligned}
    \operatorname{Cov}_{M_0}(Y_s^1,Y_u^1)
    &=
    \left(e^{|s-u|A}\Sigma\right)_{11} \\
    &=
    \frac{g_0^2}{16\alpha^5}
    e^{-\alpha|s-u|}
    \left(3+3\alpha|s-u|+\alpha^2|s-u|^2\right).
\end{aligned}
\]
Inserting this into the formula \eqref{eq:Cov},
%\[  V(t) = \operatorname{Var}_{M_0}\left(\int_0^tY_s^1\,\d s\right) = \int_0^t\int_0^t\operatorname{Cov}_{M_0}(Y_s^1,Y_u^1)\,\d u\,\d s, \]
we get
\[
    V(t)
    % = \frac{g_0^2}{8\alpha^5}  \int_0^t (t-\tau)e^{-\alpha\tau}  \left(3+3\alpha\tau+\alpha^2\tau^2\right)\,\d\tau 
    = \frac{g_0^2}{8\alpha^7} \left[ 8\alpha t-15 +  e^{-\alpha t}  \left((\alpha t)^2+7\alpha t+15\right)  \right].
\]
Substituting into \eqref{eq:general-K-dispersion-alpha}, we arrive at
\begin{equation}
\label{eq:dispK3}
\begin{aligned}
    1
    &=
    -\varrho_0 g_0 g_1\widehat W_n |q_n|^2
    \int_0^\infty
    e^{-\lambda t}
    \left(
        \frac{
            1-e^{-\alpha t}
            \left(1+\alpha t+\frac{(\alpha t)^2}{2}\right)
        }{\alpha^3}
    \right)^2  \\
    &\qquad\qquad\qquad\times
    \exp\left[
        -\frac{|q_n|^2g_0^2}{16\alpha^7}
        \left(
            8\alpha t-15
            +
            e^{-\alpha t}
            \left((\alpha t)^2+7\alpha t+15\right)
        \right)
    \right]\,\d t .
\end{aligned}
\end{equation}
\vspace{5mm}

\section*{Declarations}

\subsection*{Conflict of interest}
\noindent
The author declares no conflict of interests.

\subsection*{Data \& code availability}
\noindent
No datasets were generated or analyzed in this study. The \textsc{Matlab} code used to generate the numerical results
in Section \ref{sec:visual} is publicly available at \href{https://doi.org/10.5281/zenodo.21864715}{doi.org/10.5281/zenodo.21864715}.

\subsection*{Ethics}
This study did not involve human participants, animals, or sensitive data requiring ethical approval
or consent to participate.

\subsection*{Funding information}
This study was conducted using internal institutional resources. No external funding was received.
\vspace{5mm}

%%%%%%%%%%%%%%%%%%%%%%%%%%%%%%%%%%

\end{document}